\newtheorem{theorem}{Theorem}[section]
\newtheorem{lemma}[theorem]{Lemma}
\newtheorem{corollary}[theorem]{Corollary}
\newtheorem{proposition}[theorem]{Proposition}
\theoremstyle{definition}
\theoremstyle{remark}
\newtheorem{remark}[theorem]{Remark}
\numberwithin{equation}{section}
\begin{document}

\author[Y. Guo]{Yanqiu Guo}
\address[Y. Guo]
{Department of Computer Science and Applied Mathematics \\  Weizmann Institute of Science\\
Rehovot 76100, Israel.} \email{yanqiu.guo@weizmann.ac.il}

\author[E. S. Titi]{Edriss S. Titi}
\address[E. S. Titi]
{Department of Mathematics, Texas A\&M University, 3368 TAMU,
 College Station, TX 77843-3368, USA.  {\bf ALSO},
  Department of Computer Science and Applied Mathematics, Weizmann Institute
  of Science, Rehovot 76100, Israel.} \email{titi@math.tamu.edu and
  edriss.titi@weizmann.ac.il}

\title[Backward behavior of dissipative evolution equations]
{On the backward behavior of some dissipative evolution equations}
\date{February 25, 2015}
\keywords{Korteweg-de Vries equation, Burgers' equation, Kuramoto-Sivashinsky equation, backward behaviors, turbulence, Bardos-Tartar conjecture}
\subjclass[2010]{35K55, 35L05, 35L67, 35B20, 35B44}

\maketitle

\begin{abstract}
We prove that every solution of a KdV-Burgers-Sivashinsky type equation blows up in the energy space, backward in time, provided the solution does not belong to the global attractor. This is a phenomenon contrast to the backward behavior of the periodic 2D Navier-Stokes equations studied by Constantin-Foias-Kukavica-Majda \cite{backward-NSE}, but analogous to the backward behavior of the Kuramoto-Sivashinsky equation discovered by Kukavica-Malcok \cite{KS-backward-05}. Also we study the backward behavior of solutions to the damped driven nonlinear Schr\"odinger equation, the complex Ginzburg-Landau equation, and the hyperviscous Navier-Stokes equations. In addition, we provide some physical interpretation of various backward behaviors of several perturbations of the KdV equation by studying explicit cnoidal wave solutions. Furthermore, we discuss the connection between the backward behavior and the energy spectra of the solutions. The study of backward behavior of dissipative evolution equations is motivated by the investigation of the Bardos-Tartar conjecture stated in \cite{Bardos-Tartar}.
\end{abstract}

\section{Introduction}
Consider a  KdV-Burgers-Sivashinsky (KBS) type equation on the torus $\mathbb T=[-\frac{L}{2},\frac{L}{2}]$:
\begin{align}\label{visKdV}
u_t-\nu u_{xx}+uu_x-\beta u+\gamma u_{xxx}=f, \;\;x\in \mathbb T,
\end{align}
with the initial condition $u(0)=u_0$.
We assume that $\nu\geq 0$, $\beta\in \mathbb R$, and $\gamma \not = 0$. Also, we suppose that the time independent forcing $f$ and the initial data  $u_0$ both have spatial mean value zero,  i.e., $\int_{\mathbb T} f dx=\int_{\mathbb T} u_0 dx=0$, and hence $\int_{\mathbb T} u(x,t) dx=0$ for all time $t$ within the lifespan of the solution $u$.

We define the spaces
\begin{align*}
&H:=\{\varphi\in L^2_{per}(\mathbb T): \int_{\mathbb T} \varphi dx =0\},  \notag\\
&V:=\{\varphi\in L^2_{per}(\mathbb T): \varphi_x\in L^2_{per}(\mathbb T), \;\int_{\mathbb T} \varphi dx =0\}.
\end{align*}

We call a solution $u(t)$ of (\ref{visKdV}) \emph{global} if $u(t)$ is defined for all $t\in \mathbb R$. The main result of this paper states that, if $\nu>0$, then there does not exist a global solution of (\ref{visKdV}) in the energy space $H$ unless $u$ belongs to the global attractor. Specifically, we have the following theorem.

\begin{theorem} \label{main}
Consider the following two cases:
\begin{enumerate}[(i)]
\item Assume $\nu=0$ and $f\in H$. For any $u_0\in H$, (\ref{visKdV}) has a unique global solution $u(t)\in H$ for all $t\in \mathbb R$.
\item  Assume $\nu>0$ and $f\in V'$. Let $u(t): [0,\infty)\rightarrow H$ be a solution of (\ref{visKdV}) which does not belong to the global attractor. Then $u(t)$ cannot be extended to a global solution for all $t\in \mathbb R$.
\end{enumerate}
\end{theorem}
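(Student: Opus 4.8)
The plan is to prove part (ii) by contradiction. Suppose $u(t)$ is a global solution defined for all $t \in \mathbb{R}$ that does not lie on the global attractor. The strategy is to exploit the dissipative structure: when $\nu > 0$, the equation has a smoothing effect forward in time but a strong amplification backward in time. Concretely, I would first establish that any global solution must be bounded in $H$ uniformly for all $t \in \mathbb{R}$ — in fact it must lie in the global attractor. The contradiction comes from showing that a global, bounded-backward-in-time solution must be on the attractor, while one that is not on the attractor must blow up backward.

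\medskip

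The key technical device I expect to use is a Gronwall-type differential inequality for the backward evolution. Testing \eqref{visKdV} against $u$ in $H$ and using that the nonlinear term $\int_{\mathbb{T}} u u_x u\, dx = 0$ and the dispersive term $\int_{\mathbb{T}} u_{xxx} u\, dx = 0$, one obtains an energy identity of the form $\frac{1}{2}\frac{d}{dt}\|u\|_{L^2}^2 = -\nu \|u_x\|_{L^2}^2 + \beta \|u\|_{L^2}^2 + \langle f, u\rangle$. The crucial second step is to find a quantity whose logarithmic derivative is bounded — the natural candidate is $\Lambda(t) = \frac{\|u_x(t)\|_{L^2}^2}{\|u(t)\|_{L^2}^2}$ (a Rayleigh-quotient / Dirichlet-quotient type functional), and one shows $\Lambda(t)$ satisfies a differential inequality that forces it to grow at most at a controlled rate backward in time, or alternatively that it blows up in finite backward time unless $u$ is small. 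Then one plugs control of $\Lambda$ back into the energy identity: since $\|u_x\|_{L^2}^2 = \Lambda \|u\|_{L^2}^2$ and by Poincaré $\Lambda \geq (2\pi/L)^2$, the term $-\nu\Lambda\|u\|^2 + \beta\|u\|^2$ becomes, for large $\Lambda$, strongly negative relative to $\|u\|^2$, so $\frac{d}{dt}\|u\|^2 \leq -c(t)\|u\|^2 + C$ with $c(t)$ eventually large; running this backward in time $\|u(t)\|^2$ must have blown up at some finite negative time unless $\Lambda$ stayed bounded, in which case $u$ enters an absorbing ball and hence lies on the attractor.

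\medskip

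So the main steps, in order, are: (1) derive the energy identity and the evolution inequality for the Dirichlet quotient $\Lambda(t)$, carefully handling the forcing term $f \in V'$ via the duality pairing and Young's inequality; (2) show that if $u$ exists for all $t \le 0$ then either $\Lambda(t)$ remains bounded as $t \to -\infty$, or $\|u(t)\|_{L^2}$ blows up at some finite $t_0 < 0$, contradicting globalness; (3) in the bounded-$\Lambda$ case, translate this into higher-order bounds (bootstrapping regularity backward in time using parabolic smoothing from the $-\nu u_{xx}$ term, which is available because $\nu > 0$), concluding $u(t)$ is bounded in every $H^s$ uniformly in $t \in \mathbb{R}$; (4) invoke that a complete bounded trajectory lies in the global attractor, contradicting the hypothesis.

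\medskip

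I expect the main obstacle to be step (1)–(2): controlling the sign and size of the terms in the differential inequality for $\Lambda(t)$ precisely enough to force finite-time backward blow-up. The nonlinear term $uu_x$, when one differentiates $\|u_x\|^2$, produces a term like $\int u_x u_x u_{xx}\,dx$ together with contributions that are not obviously of a favorable sign, and these must be absorbed using interpolation inequalities (Agmon, Gagliardo–Nirenberg on the torus) and the Poincaré inequality in terms of $\Lambda$ itself. The dispersive term $\gamma u_{xxx}$ needs care: although it is skew-adjoint in $H$, in the $\|u_x\|^2$ balance it contributes $\gamma\int u_x u_{xxxx}\,dx = -\gamma \int u_{xx}u_{xxx}\,dx = 0$ by periodicity, so it should again drop out — verifying this cancellation cleanly is part of the work. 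The low-regularity forcing $f \in V'$ rather than $f \in H$ is a minor nuisance handled by splitting $\langle f, u\rangle \le \|f\|_{V'}\|u\|_V \le \|f\|_{V'}(\|u_x\|_{L^2} + \|u\|_{L^2})$ and absorbing into $\nu\|u_x\|^2$, but it means the clean energy cancellations must be tracked with $\epsilon$'s.
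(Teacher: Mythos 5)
Your proposal for part (ii) has a genuine gap at its core. The dichotomy you build the argument on --- ``either the Dirichlet quotient $\Lambda(t)=|u_x|^2/|u|^2$ stays bounded backward in time, in which case $u$ enters an absorbing ball and lies on the attractor, or $|u|$ blows up at a finite negative time'' --- is not proved and, as stated, is false on both horns. Boundedness of $\Lambda$ does not force the trajectory into an absorbing ball: the energy identity $\frac{1}{2}\frac{d}{dt}|u|^2=-\nu|u_x|^2+\beta|u|^2+\langle f,u\rangle$ with $|u_x|^2\le M|u|^2$ only caps the backward growth rate at an exponential; indeed, for the 2D NSE the Constantin--Foias--Kukavica--Majda solutions have bounded Dirichlet quotient, grow exponentially as $t\to-\infty$, and are \emph{not} on the attractor, so this implication cannot be a soft consequence of bounded $\Lambda$. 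Conversely, large or even unbounded $\Lambda(t)$ does not yield finite-time backward blow-up: from $\frac{d}{dt}|u|^2\le -(\nu\Lambda(t)-2\beta)|u|^2+C$ run backward you only get blow-up if $\nu\Lambda(t)$ grows superlinearly \emph{as a function of} $|u(t)|^2$ along the solution, and nothing in your steps (1)--(2) produces such a quantitative lower bound; a solution could a priori have $\Lambda\to\infty$ while $|u(t)|$ grows super-exponentially yet remains finite for all finite $t$, which is perfectly compatible with global existence. Moreover the evolution of $\Lambda$ for a Burgers-type nonlinearity lacks the orthogonality $((u\cdot\nabla)u,\Delta u)=0$ that makes the quotient tractable for the 2D NSE (the paper stresses exactly this point), so even the ``controlled growth of $\Lambda$'' half of your dichotomy is out of reach by the indicated estimates.

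What is actually needed, and what the paper does, is a mechanism extracting decay of the energy forward in time at an \emph{arbitrarily large} exponential rate $\alpha$ (with an $\alpha$-dependent constant), which the plain $L^2$ estimate cannot give since its linear rate $\nu\lambda_1-2\beta$ may even be negative. The paper achieves this with the background-flow (gauge function) method of Goodman and Kukavica--Malcok: one studies $F_{\alpha,\epsilon}(t)=\inf_{\xi}\int_{\mathbb T}(u-\phi_{\alpha,\epsilon}(\cdot+\xi))^2dx$ for a sawtooth-like profile $\phi_{\alpha,\epsilon}$; the minimizing shift forces $\int_{\mathbb T} u\,b_{\epsilon}(\cdot+\xi^\ast)dx=0$, Goodman's Poincar\'e-type inequality then absorbs the convection term, and one obtains $\frac{d}{dt}_{+}F_{\alpha,\epsilon}\le-\frac{\alpha}{2}|u|^2+C_0(\alpha)$ for every $\alpha\ge 8\beta$. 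This first shows super-exponential backward growth off the attractor, and then a quantitative dyadic-shell estimate (time spent with $R\le|u|\le 2R$ is at most $\frac{30}{\alpha}+\frac{c\alpha L^3}{R^2}$) with the choice $\alpha_j\sim 2^{j/2}$ sums to a finite backward lifespan, which is the step your plan has no substitute for. Separately, your proposal does not address part (i) at all (global well-posedness for $\nu=0$, which in the paper rests on Bourgain's result for KdV plus a Gronwall argument for the perturbation $-\beta u+f$).
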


The study of the behavior of solutions to (\ref{visKdV}) backward in time is motivated by the pioneering work \cite{backward-NSE}, where Constantin, Foias, Kukavica and Majda investigated the backward behavior of the solution to the 2D periodic Navier-Stokes equations (NSE). Indeed, they showed that the set of initial data for which the solution exists for all negative time and has exponential growth is rather rich (dense in $\mathcal H$ with the topology of $(\dot H^1_{per})'$ where $\mathcal H=\{u\in L^2_{per}(\mathbb T^2)^2: \text{div} \,u=0, \int_{\mathbb T^2} u dx=0   \}$), which is a quite remarkable result since it indicates that the backward behavior of the 2D periodic NSE is closer to the corresponding linear dissipative equation. Also their result provided a partial positive answer to a conjecture in \cite{Bardos-Tartar}, where Bardos and Tartar conjectured that, for the 2D periodic NSE, the solution semi-flow $S(t)\mathcal H$ is dense in the phase space $\mathcal H$ for any fixed $t>0$.
On the other hand, the 2D Euler equations are globally well-posed forward and backward in time, and the energy is conserved for smooth solutions. The result in \cite{backward-NSE} may be rephrased as, if we add a viscosity to the 2D periodic Euler equations, which gives the NSE, then there is still a rich set of initial data such that the solution can be extended to all negative times with the energy growing exponentially, i.e., adding a viscosity to the periodic 2D Euler equations fails to drive these solutions to blow up backward in finite time.

We remark that a corollary of Theorem \ref{main} (i.e. Corollary \ref{corr} in section \ref{sec-2}) implies that the solution semi-flow $S(t)H$ of the KBS equation (\ref{visKdV}) with $\nu>0$ is not dense in $H$ for any $t>0$. This result is contrast to the Bardos-Tartar conjecture on the NSE mentioned above \cite{Bardos-Tartar}.

Notice that, if we remove the viscosity from the KBS equation (\ref{visKdV}), i.e. $\nu=0$, then it becomes
\begin{align}\label{KdV}
u_t-uu_x-\beta u+\gamma u_{xxx}=f, \;\;x\in \mathbb T,
\end{align}
 which is a KdV type equation with a linear term $-\beta u$. It is well-known that the real-valued KdV equation is globally well-posed in $H$ for $t\in \mathbb R$ (see, e.g., \cite{B-I-T,Bou-2}, and references therein; see also \cite{Guo-Simon-Titi} for a similar result concerning a coupled system of KdV). If $\beta<0$, then forward in time, (\ref{KdV}) is a weakly damped KdV equation that possesses a global attractor in $H$ \cite{Ghi,Gou}. On the other hand, if $\beta>0$, then $-\beta u$ plays the role of a source term forward in time as well as a weak damping backward in time.  If we consider (\ref{KdV}) backward in time with $\beta>0$, it is a dissipative system possessing a global attractor, which seems more ``stable" than 2D periodic Euler equations. So it could be reasonable to guess that the solution of the KBS equation (\ref{visKdV}) (i.e. (\ref{KdV}) plus a viscosity) might have a similar behavior backward in time as the 2D periodic NSE. However, such intuition turns out to be completely false. Indeed, Theorem \ref{main} tells us that no solutions of (\ref{visKdV}), with $\nu>0$, can be extended to all negative time unless they belong to the global attractor. Here, we would like to remark that the different backward behavior of the KBS equation (\ref{visKdV}) and the 2D periodic NSE might be related to the rate of the energy cascade to small scales for the Burgers and the KdV equations as well as for the Euler equations, respectively.
 Furthermore, we stress that, for the backward blow-up of the KBS equation (\ref{visKdV}), the viscosity plays a vital role, contrast to the backward blow-up of the viscous Burgers equation. Indeed, the inviscid Burgers equation can also blow up backward in finite time, but the KBS equation without viscosity (i.e. the KdV type equation (\ref{KdV})) is globally well-posed for all $t\in \mathbb R$.

The proof of Theorem \ref{main} follows the idea presented in \cite{KS-backward-05}. It is shown in \cite{KS-backward-05} that all solutions of the Kuramoto-Sivashinsky equation off the global attractor blow up backward in finite time. Notice that, the Kuramoto-Sivashinsky equation involves a hyperviscosity tending to drive the energy increasing very fast backward in time. However, the KBS equation (\ref{visKdV}) contains a regular viscosity, and a dispersion distributing the energy, which seems closer to the NSE. Nonetheless, the energy of the KBS equation still increases too fast and blows up backward in finite time, like the Kuramoto-Sivashinsky equation, due to the negative viscosity. It can be viewed as adding the dispersion term $u_{xxx}$ to the viscous Burgers equation fails to prevent solutions blowing up as the time goes backwards.

In addition to the study of the KBS equation (\ref{visKdV}), we investigate the backward behavior of solutions to some other typical dissipative equations. Specifically, we show that any solution off the global attractor, to the damped driven nonlinear Schr\"odinger equation, grows exponentially fast backward in time. Also, we prove that all solutions to the complex Ginzburg-Landau equation of certain parameter regime, blow up backward in finite time, unless they belong to the global attractor. Moreover, we discuss the asymptotic behavior of solutions to a hyperviscous NSE, forward and backward in time.

The viscous KdV equation is a special case of the KBS equation by taking $\beta=\gamma=0$ and $f=0$ in (\ref{visKdV}). It is well-known that the classical periodic KdV equation has explicit cnoidal wave solutions. Also the solutions of the KdV are connected to the spectrum problem of certain Schr\"odinger operator. These nice properties of the KdV equation provide us an opportunity to employ the modulation asymptotic analysis to study the backward behavior of the viscous KdV equation. Indeed, we give an evidence that the amplitude of the cnoidal wave solution of the viscous KdV equation approaches infinity backward in finite time, which is consistent with our result on the backward blow-up of the energy. Furthermore, we attempt to analyze the underlying mechanism for the backward blow-up of the viscous KdV equation as well as the backward non-blowup phenomenon of the 2D periodic NSE, by using the energy spectra of solutions together with the Kolmogorov theory of turbulence. These discussions from the physical point of view are novelties of the manuscript and are major motivation for writing this article.

Now we briefly survey some related works in the literature concerning the backward behaviors of dissipative evolution equations. It is proved in \cite{Vukadinovic-02} that the 2D periodic viscous Camassa-Holm equations (also known as Navier-Stokes-$\alpha$ model \cite{Foias-Holm-Titi}) has an analogous backward behavior as the 2D periodic NSE. Some different phenomenon has been discovered in \cite{Dascaliuc-03} for Burgers' original model for turbulence which consists of a PDE coupled with a nonlocal ODE, and it was shown for this system, there are three possible behaviors of a solution as $t\rightarrow -\infty$: it can be globally bounded, grow exponentially fast, or grow faster than any exponential.

Recall that the 2D periodic NSE reads
\begin{align}  \label{2dNSE}
 \begin{cases}
 u_t-\nu \Delta u+(u\cdot \nabla)u+\nabla p=f, \;\;x\in \mathbb T^2 \\
 \text{div\,} u=0,
 \end{cases}
 \end{align}
 with $u(0)=u_0$. A main ingredient in studying the backward behavior of solutions to (\ref{2dNSE}) is a pair of orthogonality relations of the form $((u\cdot \nabla)v,v)_{L^2}=0$ and $((u\cdot \nabla)u, \Delta u)_{L^2}=0$, provided $u$, $v$ are periodic in two dimensional space and divergence free. Such pair of orthogonality relations lead to a pair of energy and enstrophy formula: $\frac{1}{2}\frac{d}{dt}|u|_{L^2}^2+\nu|\nabla u|_{L^2}^2=(f,u)_{L^2}$ and $\frac{1}{2}\frac{d}{dt}|\nabla u|_{L^2}^2+\nu|\Delta u|_{L^2}^2=-(f,\Delta u)_{L^2}$, which are identical to the corresponding estimates for the linear Stokes equations (i.e. the equation (\ref{2dNSE}) with the nonlinear term vanished), and are the essence for proving that there is a rich set of initial data such that the solution to (\ref{2dNSE}) exists globally for all  $t\in \mathbb R$ (see \cite{backward-NSE}). It is demonstrated in \cite{Lorenz} how to generate a second orthogonality property for Lorenz equations, by finding an alternate linear operator.

In addition, see \cite{Dascaliuc-05,Vukadinovic-04} for some related results on this topic.

Throughout, $|\cdot|$ and $(\cdot,\cdot)$ represent the norm and the inner product in $L^2_{per}(\mathbb T)$ respectively, and $\|\cdot\|$ stands for the norm in $H^1_{per}(\mathbb T)$, i.e., $\|u\|^2=|u_x|^2+|u|^2$ for $u\in H^1_{per}(\mathbb T)$. As usual, we set $A=-\partial_{xx}$. Moreover, we will denote by $c$ a dimensionless constant that may change from line to line.

The paper is organized as follows. In section \ref{main} we shall prove the main rigorous mathematical result of the paper: Theorem \ref{main}. In section \ref{sec-dis}, we investigate the backward behavior of solutions to damped driven nonlinear Schr\"odinger equation, complex Ginzburg-Landau equation, and hyperviscous Navier-Stokes equations, as well as mentioning some open problems. Section \ref{spectra} provides an elegant physical interpretation of the backward behavior of various perturbations of the KdV equation, and also includes some interesting discussion about the relation of the backward behavior of dissipative equations with the Kolmogorov theory of energy spectra of turbulence. The appendix is devoted to proving two results presented in section \ref{sec-dis}.

\bigskip

\section {Proof of Theorem \ref{main}}     \label{sec-2}
To begin with, let us consider the first part of Theorem \ref{main}, i.e., $\nu=0$. Without the viscous term, (\ref{visKdV}) becomes a KdV type equation (\ref{KdV}). The global well-posedness of the real-valued KdV equation $u_t+uu_x+u_{xxx}=0$ in $H$ has been established in Bourgain's seminal paper \cite{Bou-2} (see \cite{B-I-T} for an alternative proof). Note that the energy of the KdV equation is conserved. Adding a linear perturbation $-\beta u$ and a time-independent forcing $f\in H$ to the KdV equation (ending up with equation (\ref{KdV})) destroys the energy conservation, but the global well-posedness in $H$ for all $t\in \mathbb R$ is still valid for (\ref{visKdV}). Indeed, taking the scalar product of (\ref{KdV}) with $u$ yields
\begin{align*}
\frac{1}{2}\frac{d}{dt}|u|^2 -\beta |u|^2 = (f,u).
\end{align*}
By assuming $\beta>0$ and using H\"older's, Young's and Gronwall inequalities, we deduce
\begin{align} \label{fo}
|u(t)|^2 \leq e^{3\beta t} |u_0|^2 + \frac{|f|^2}{3\beta^2} (e^{3\beta t}-1), \text{\;\;for\;\;} t\geq 0,
\end{align}
and
\begin{align}  \label{ba}
|u(t)|^2 \leq e^{\beta t}|u_0|^2 - \frac{1}{\beta^2} (e^{\beta t} -1 ) |f|^2, \text{\;\;for\;\;} t\leq 0.
\end{align}
Notice that, (\ref{fo}) implies that the energy grows at most exponentially fast as $t\rightarrow +\infty$, while (\ref{ba}) shows that as the time $t$ goes backwards, the energy is uniformly bounded. In fact, $\limsup_{t \rightarrow -\infty} u(t) \leq \frac{1}{\beta^2} |f|^2$, and thus all trajectories of equation (\ref{KdV}) approach a global attractor in $H$ as $t\rightarrow -\infty$, provided $\beta>0$. By switching the direction of the time, the same analysis holds true for the case $\beta<0$.

Now we prove the second part of Theorem \ref{main}, i.e., the case $\nu>0$.

\subsection{Asymptotic estimates for the long-time dynamics}
In the second part of Theorem \ref{main}, we assume $f\in V'$. In the proof, for the sake of clarification, we assume $\beta>0$, which is the more interesting case. The case $\beta\leq 0$ can be treated analogously with less effort.

In order to study the behavior of solutions for evolutionary equations forward or backward in time, a natural method is to perform an \emph{a priori} estimate of the energy, and for equation (\ref{visKdV}) the energy is the $L^2$-norm of the solution.
A straightforward way of estimating the energy $|u|$ is to take the scalar product of the KBS equation (\ref{visKdV}) with $u$:
\begin{align*}
\frac{1}{2}\frac{d}{dt}|u|^2+\nu |u_x|^2 -\beta |u|^2 =  (f,u) \leq \frac{\nu}{2} |u_x|^2 + \frac{1}{2\nu} |A^{-\frac{1}{2}}f|^2.
\end{align*}
It follows that
\begin{align} \label{ene}
\frac{d}{dt}|u|^2+(\nu \lambda_1 -2\beta) |u|^2 \leq  \frac{1}{\nu} |A^{-\frac{1}{2}}f|^2,
\end{align}
where the Poincar\'e inequality $|u_x|^2\geq \lambda_1 |u|^2$ has been used, for any $u\in V$. Here, $\lambda_1$ is the first eigenvalue of the operator $A=-\partial_{xx}$, i.e., $\lambda_1=(2\pi/L)^2$.
Unless the viscosity $\nu$ is large (i.e., $\nu>\frac{2\beta}{\lambda_1}$ which is not an interesting case), the energy estimate (\ref{ene}) does not provide an uniform bound on the $L^2$-norm of $u(t)$ for all time $t$. However, intuitively the energy of the KBS equation (\ref{visKdV}) must be uniform bounded since it involves the viscous term $\nu u_{xx}$ acting as a strong dissipation. Notice that, a drawback of the above energy estimate is that it does not take advantage of the nonlinear convection term $uu_x$, which plays the role of a medium transferring energy from lower to higher wave numbers to prevent the growth of low wave number modes. Indeed, in the KBS equation (\ref{visKdV}), $-\beta u$ amplify the lower wave numbers while the diffusion $\nu u_{xx}$ damps the higher wave numbers, and the interaction between these two linear terms are through the energy conducting $uu_x$.

In order to take advantage of the convection $uu_x$, we employ the method of Lyapunov functions (also called the background flow method \cite{Constantin-Doering-95,Cons-Doer-95,Doering-Constantin-92,Doering-Constantin-94}). More precisely, we subtract $u$ by an appropriate gauge function $\phi$ and study $|u-\phi|$, instead of directly estimating the energy $|u|$. This technique was used in \cite{Bronski,Collet-Eckmann,Goodman-94,KS-backward-92, KS-backward-05,KS-84} for the purpose of studying the Kuramoto-Sivashinsky equation (KSE). It is worth mentioning that, in the case of odd functions, which is an invariant space for the KSE (also for equation (\ref{visKdV}) with $\gamma=0$), the gauge function $\phi$ is simpler and was introduced in \cite{KS-84}. On the other hand, the best asymptotic energy estimate with respect to the length $L$ for the KSE was obtained in \cite{Otto-09} (see also \cite{Otto-05}) by using different techniques.

We adopt the gauge function $\phi$ which was used in \cite{Goodman-94, KS-backward-05}. Similar idea can be found in \cite{Bronski,Collet-Eckmann}.
To construct the function $\phi$, we need to do some preparation. In fact, for every $\epsilon\in (0,L/2)$, there exists a periodic non-negative smooth function $b_{\epsilon}: \mathbb R \rightarrow \mathbb [0,\infty)$ with period $L$ and supported in $(-\epsilon,\epsilon)$ such that
\begin{enumerate}
\item $\int_{\mathbb T} b_{\epsilon}(x) dx = L$;
\item $\sup_{x\in \mathbb R} b_{\epsilon}(x) \leq c L/\epsilon$;
\item $|b_{\epsilon}| \leq c L/{\epsilon}^{\frac{1}{2}}$;
\item $|b'_{\epsilon}|\leq c L/\epsilon^{\frac{3}{2}}$.
\end{enumerate}
An example of such function was given in \cite{KS-backward-05}: if define $\eta(x)=L(\epsilon-|x|)/\epsilon^2$ for $|x|\leq \epsilon$, then $b_{\epsilon}$ can be obtained by mollifying and periodically extending $\eta$ to the whole real line. The following Poincar\'e type inequality was proved in \cite{Goodman-94}.

\begin{proposition}  \cite{Goodman-94}          \label{prop1}
 Let $b_{\epsilon}: \mathbb R \rightarrow \mathbb [0,\infty)$ be a periodic function with period $L$
 such that $\emph{supp} \, b_{\epsilon} \cap [-\frac{L}{2},\frac{L}{2}] \subset (-\epsilon,\epsilon)$,
 $\int_{\mathbb T} b_{\epsilon}(x) dx = L$, and
 $\sup_{x\in \mathbb R} b_{\epsilon}(x) \leq c L/\epsilon$. If $\int_{\mathbb T} b_{\epsilon} (x) u(x) dx=0$,
 for some $u\in V$, then
 \begin{align*}
 \int_{\mathbb T} b_{\epsilon}(x) u^2 (x) dx \leq c_0 \epsilon  L \int_{\mathbb T} u_x^2(x) dx, \text{\;\;for some\;\;} c_0>0.
 \end{align*}
 \end{proposition}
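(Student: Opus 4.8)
The plan is to use the concentration of $b_\epsilon$ in a window of width $2\epsilon$ together with the orthogonality constraint $\int_{\mathbb T} b_\epsilon u\, dx = 0$ to locate a zero of $u$ inside that window, and then to control $u$ pointwise on the window by $|u_x|$.

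First I would note that $u\in V\subset H^1_{per}(\mathbb T)$, so in one space dimension $u$ has a continuous (indeed H\"older) representative and its pointwise values are well defined. Since $b_\epsilon\ge 0$ with $\int_{\mathbb T}b_\epsilon\,dx=L>0$, while $\int_{\mathbb T}b_\epsilon u\,dx=0$, the $b_\epsilon$-weighted mean of $u$ vanishes. Comparing this mean with the maximum $M$ and minimum $m$ of $u$ over the compact set $K:=\mathrm{supp}\,b_\epsilon\cap[-\tfrac L2,\tfrac L2]\subset(-\epsilon,\epsilon)$ gives $mL\le 0\le ML$, hence $u$ is $\le 0$ at some point of $K$ and $\ge 0$ at some point of $K$. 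Applying the intermediate value theorem to $u$ on the subinterval of $[-\epsilon,\epsilon]$ joining those two points produces a point $x_0\in[-\epsilon,\epsilon]$ with $u(x_0)=0$.

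Next, for any $x$ in $\mathrm{supp}\,b_\epsilon$ represented in the fundamental domain $[-\tfrac L2,\tfrac L2]$, so that $x\in(-\epsilon,\epsilon)$, I would write $u(x)=u(x)-u(x_0)=\int_{x_0}^{x}u_x(s)\,ds$ and apply the Cauchy--Schwarz inequality,
$$u(x)^2\le |x-x_0|\int_{x_0}^{x}u_x^2(s)\,ds\le 2\epsilon\int_{\mathbb T}u_x^2(x)\,dx,$$
using that $|x-x_0|\le 2\epsilon$ and that the interval with endpoints $x_0$ and $x$ lies in $[-\epsilon,\epsilon]\subset[-\tfrac L2,\tfrac L2]$, so the integral of $u_x^2$ over it is at most $\int_{\mathbb T}u_x^2\,dx$. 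Multiplying this pointwise bound by $b_\epsilon(x)\ge 0$ and integrating over $\mathbb T$ then yields
$$\int_{\mathbb T}b_\epsilon(x)u^2(x)\,dx\le\Big(2\epsilon\int_{\mathbb T}u_x^2\,dx\Big)\int_{\mathbb T}b_\epsilon(x)\,dx=2\epsilon L\int_{\mathbb T}u_x^2(x)\,dx,$$
which is the desired inequality with $c_0=2$.

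The argument is short and I do not anticipate a genuine obstacle; the only point needing a little care is the existence of the zero $x_0$ when $\mathrm{supp}\,b_\epsilon$ fails to be an interval, which is why I would invoke the intermediate value theorem for $u$ on all of $[-\epsilon,\epsilon]$ rather than on the support itself. I also note that the hypothesis $\sup_x b_\epsilon(x)\le cL/\epsilon$ is not actually needed for this particular estimate — it is recorded in the proposition because it is used later in constructing the gauge function $\phi$.
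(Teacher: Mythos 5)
Your argument is correct and complete: the weighted mean-zero condition forces $m\le 0\le M$ on $K\subset(-\epsilon,\epsilon)$, the intermediate value theorem (legitimately applied on the interval $[-\epsilon,\epsilon]$ rather than on the possibly disconnected support) gives a zero $x_0$ of the continuous representative of $u$, and the fundamental theorem of calculus with Cauchy--Schwarz yields the pointwise bound $u(x)^2\le 2\epsilon\int_{\mathbb T}u_x^2\,dx$ on the support, which integrates against $b_\epsilon$ to give the inequality with $c_0=2$. Note that the paper itself does not prove this proposition but cites Goodman's paper for it, and your proof is essentially the standard argument behind that result; your observations that the bound $\sup_x b_\epsilon\le cL/\epsilon$ is not needed here (it enters only in the estimates for the gauge function $\phi_{\alpha,\epsilon}$) and that the mean-zero condition on $u$ is likewise unused are both accurate.
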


As in \cite{Goodman-94,KS-backward-05}, we define a periodic smooth function $\phi_{\alpha,\epsilon}: \mathbb R\rightarrow \mathbb R$ by
\begin{align}  \label{def-phi}
\phi_{\alpha,\epsilon}(x)=\alpha x- \alpha \int_0^x b_{\epsilon}(y)dy,
\end{align}
where $\alpha>0$.
Since $b_{\epsilon}$ is periodic with period $L$ and $\int_{\mathbb T} b_{\epsilon}(x) dx = L$, we see that $\phi_{\alpha,\epsilon}$ is also periodic with period $L$. Furthermore, by using the properties of the function $b_{\epsilon}$, simple calculation gives
\begin{align}
&|\phi_{\alpha,\epsilon}|\leq c\alpha L^{\frac{3}{2}},  \label{phi1} \\
&|\phi'_{\alpha,\epsilon}| \leq c \alpha (L^{\frac{1}{2}}+L/\epsilon^{\frac{1}{2}} ),  \label{phi2} \\
&|\phi''_{\alpha,\epsilon}| \leq c \alpha  L/\epsilon^{\frac{3}{2}}.  \label{phi3}
\end{align}
Also the following simple fact is useful: since $\phi_{\alpha,\epsilon}$ is periodic, its $L^2$-norm has translation invariance
$|\phi_{\alpha,\epsilon}(x)|=|\phi_{\alpha,\epsilon}(x+\xi)|$ for any $\xi\in \mathbb R$.

\begin{theorem} \label{thm-ball}
For sufficiently large time $t\geq t_1$ which depends on $|u_0|$,  the solution $S(t)u_0$ of (\ref{visKdV}) enters a ball in $H$ with the radius $\rho$ satisfying the following asymptotic relation with the length $L$ and the parameters $\nu$, $\beta$, $\gamma$:
\begin{align} \label{asym}
\rho \sim L^{5/2}, \; \rho \sim \frac{1}{\nu^2},\; \rho \sim \beta^{5/2}, \; \rho \sim \gamma.
\end{align}
\end{theorem}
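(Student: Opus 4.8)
The plan is to carry out the background-flow (Lyapunov function) estimate for $|u(t)-\phi|$ with the gauge function $\phi=\phi_{\alpha,\epsilon}$ of (\ref{def-phi}), leaving $\alpha>2\beta$ and $\epsilon\in(0,L/2)$ free until the end, where they are fixed by optimizing the resulting bound. One point must be settled at the outset: to invoke Proposition \ref{prop1} one needs $\int_{\mathbb T}b_\epsilon u\,dx=0$, which fails for the fixed profile $b_\epsilon$. So I would instead work with a translate $b_\epsilon(\cdot-\xi(t))$ — and correspondingly with $\phi=\phi_{\alpha,\epsilon}(\cdot-\xi(t))$ — choosing $\xi(t)$ so that $\int_{\mathbb T}b_\epsilon(x-\xi(t))\,u(x,t)\,dx=0$. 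Such $\xi(t)$ exists because $\xi\mapsto\int_{\mathbb T}b_\epsilon(x-\xi)u(x,t)\,dx$ is continuous, $L$-periodic, and has $\xi$-average equal to $\int_{\mathbb T}u(x,t)\,dx=0$. The bounds (\ref{phi1})--(\ref{phi3}) are translation invariant, so they are unaffected by this modification.

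Next I would take the $L^2$ scalar product of (\ref{visKdV}) with $u-\phi$ and exploit the periodic cancellations. The decisive one is $(uu_x,u)=0$, together with $(uu_x,\phi)=-\tfrac12(u^2,\phi_x)$ and $\phi_x=\alpha-\alpha b_\epsilon(\cdot-\xi)$, which give
\begin{align*}
(uu_x,\,u-\phi)=\frac{\alpha}{2}\,|u|^2-\frac{\alpha}{2}\int_{\mathbb T}b_\epsilon(x-\xi)\,u^2\,dx.
\end{align*}
The first term is the coercive gain; the second, by the choice of $\xi$ and Proposition \ref{prop1}, is at most $\tfrac12 c_0\alpha\epsilon L\,|u_x|^2$ and is absorbed into $\nu|u_x|^2$ provided $\alpha\epsilon\le\nu/(2c_0L)$. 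Using also $(u_{xxx},u)=0$, the remaining terms produce $-\nu(u_{xx},u-\phi)=\nu|u_x|^2-\nu(u_x,\phi_x)$, $\gamma(u_{xxx},u-\phi)=-\gamma(u_x,\phi_{xx})$, $-\beta(u,u-\phi)=-\beta|u|^2+\beta(u,\phi)$, $(f,u-\phi)\le|A^{-1/2}f|\,(|u_x|+|\phi_x|)$, and, since $\partial_t\phi=-\dot\xi\,\phi_x$, the extra contribution $-\dot\xi(\phi_x,u-\phi)$. Each cross term is split by Cauchy--Schwarz and Young's inequality: the $|u_x|^2$ parts into the leftover fraction of $\nu|u_x|^2$, the $|u|^2$ parts into $(\tfrac{\alpha}{2}-\beta)|u|^2$, and the remainder is collected into a time-independent constant $R$ built from $|A^{-1/2}f|$, $|\phi|$, $|\phi_x|$, $|\phi_{xx}|$ (controlled by (\ref{phi1})--(\ref{phi3})) together with a bound for $\dot\xi$; for the latter one differentiates $\int_{\mathbb T}b_\epsilon(\cdot-\xi)u\,dx=0$ in $t$, substitutes (\ref{visKdV}) and integrates the spatial derivatives onto $b_\epsilon$, which expresses $\dot\xi$ through $|u|$, $|u|^2$ and norms of derivatives of $b_\epsilon$, enough to control it on every finite time interval.

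This yields a differential inequality $\frac{d}{dt}|u-\phi|^2+\nu|u_x|^2+c(\alpha-2\beta)|u|^2\le R$. Dropping $\nu|u_x|^2$ and using $|u|^2\ge\tfrac12|u-\phi|^2-|\phi|^2$ gives $\frac{d}{dt}|u-\phi|^2+c(\alpha-2\beta)|u-\phi|^2\le R+c(\alpha-2\beta)|\phi|^2$, so Gronwall's inequality produces $\limsup_{t\to\infty}|u(t)-\phi|^2\le c\big(R/(\alpha-2\beta)+|\phi|^2\big)$, hence $|u(t)|^2\le\rho^2:=c\big(R/(\alpha-2\beta)+|\phi|^2\big)$ for all $t\ge t_1$, with $t_1$ governed by the transient $e^{-c(\alpha-2\beta)t}|u_0-\phi|^2$ and so depending only on $|u_0|$. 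It remains to optimize: the absorption condition forces $\epsilon\sim\nu/(\alpha L)$, while the sign constraint makes $\alpha\sim\beta$ the cheapest admissible choice; inserting these into (\ref{phi1})--(\ref{phi3}) one sees that $\gamma^2\nu^{-1}|\phi_{xx}|^2$ dominates $R$ in the non-trivial regime of small $\nu$, and carrying this balance through the above yields the asserted dependence of $\rho$ on $L$, $\nu$, $\beta$, $\gamma$, i.e.\ (\ref{asym}). Note $\epsilon\in(0,L/2)$ amounts to $\nu\lesssim\beta L^2$, exactly the regime in which the crude estimate (\ref{ene}) is inconclusive, consistent with the earlier remark that large $\nu$ is uninteresting.

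The genuinely delicate part is the $u$-dependence of the gauge function: constructing $\xi(t)$ and, above all, controlling $\dot\xi(\phi_x,u-\phi)$ uniformly enough in time to close the Gronwall estimate is where the real work lies, and is handled following \cite{Goodman-94,KS-backward-05}. Once the correct moving gauge is in place, the cancellations, the use of Proposition \ref{prop1}, and the final optimization of $\alpha$ and $\epsilon$ are essentially bookkeeping.
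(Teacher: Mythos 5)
Your overall scheme --- the gauge function $\phi_{\alpha,\epsilon}$, restoring the orthogonality needed for Proposition \ref{prop1} by translating the profile, absorbing $\frac{\alpha}{2}\int b_\epsilon(\cdot-\xi) u^2$ into $\nu|u_x|^2$ via $\epsilon\sim\nu/(\alpha L)$, the Gronwall step, and the final choice $\alpha\sim\beta$ with the $\gamma^2\nu^{-1}|\phi_{\alpha,\epsilon}''|^2$ contribution dominating --- is exactly the paper's argument, and your bookkeeping reproduces (\ref{asym}). The gap sits precisely at the point you flag as delicate: the time dependence of the translate. You propose to fix $\xi(t)$ by the constraint $\int_{\mathbb T}b_\epsilon(x-\xi(t))u(x,t)\,dx=0$, differentiate $|u-\phi_{\alpha,\epsilon}(\cdot-\xi(t))|^2$ in $t$, and control $\dot\xi$ by implicit differentiation of the constraint. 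This is not justified as stated: for each $t$ the constraint only gives a nonempty zero set of $\xi\mapsto\int b_\epsilon(x-\xi)u\,dx$, and there is no guarantee of a selection $\xi(t)$ that is differentiable (or even continuous) in $t$; implicit differentiation requires $\int b_\epsilon'(x-\xi)u\,dx\neq0$, which can fail, so $\dot\xi$ may be undefined or unbounded. Moreover, a bound on $\dot\xi$ ``on every finite time interval'' in terms of $|u|$ cannot close an absorbing-ball estimate: the term $\dot\xi\,(\phi_x,u-\phi)$ would then enter superlinearly in $|u|$ and is not absorbed by $-\frac{\alpha}{2}|u|^2$ uniformly in time.

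Two remarks on how to repair it. First, under your own constraint the offending term actually vanishes identically: $(\phi_x,\phi)=0$ by periodicity, and $(\phi_x,u)=\alpha\int_{\mathbb T}u\,dx-\alpha\int_{\mathbb T}b_\epsilon(\cdot-\xi)u\,dx=0$, so $\dot\xi\,(\phi_x,u-\phi)=0$ and no quantitative bound on $\dot\xi$ is needed --- but one still needs a sufficiently regular selection $\xi(t)$ for the chain rule to make sense, which is not free. Second, the paper removes the selection/regularity problem altogether: it works with $F_{\alpha,\epsilon}(t)=\inf_{\xi}\int_{\mathbb T}(u(x,t)-\phi_{\alpha,\epsilon}(x+\xi))^2dx$, notes that a minimizer $\xi^{\ast}(t)$ satisfies the first-order condition $\int_{\mathbb T}(u-\phi_{\alpha,\epsilon}(\cdot+\xi^{\ast}))\phi_{\alpha,\epsilon}'(\cdot+\xi^{\ast})dx=0$, which together with $\int_{\mathbb T}u\,dx=0$ is exactly your orthogonality and makes Proposition \ref{prop1} applicable, and then bounds the upper one-sided (Dini) derivative of $F_{\alpha,\epsilon}$ by the $t$-derivative of the integral at frozen $\xi=\xi^{\ast}(t)$; no differentiability of $\xi^{\ast}(t)$ is ever invoked, and Gronwall is run on $F_{\alpha,\epsilon}$ rather than on $|u-\phi|^2$ along a moving gauge. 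With that replacement your cancellations, absorption condition and optimization go through verbatim and yield $\rho^2\sim c\bigl(\frac{1}{\nu\beta}|A^{-\frac{1}{2}}f|^2+\frac{\gamma^2}{\nu^4}\beta^5L^5\bigr)$, i.e.\ (\ref{asym}).
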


\begin{remark}
Theorem \ref{thm-ball} shows that there exists an absorbing ball for (\ref{visKdV}), so by the classical theory of attractors (see, e.g. \cite{Tbook}), the KBS equation (\ref{visKdV}) possesses a global attractor.
\end{remark}

\begin{proof}
The approach follows the idea in \cite{Goodman-94,KS-backward-05}. We aim to estimate
\begin{align*}
F_{\alpha,\epsilon}(t):= \inf_{\xi \in \mathbb T} \int_{\mathbb T}(u(x,t) - \phi_{\alpha,\epsilon}(x+\xi))^2 dx.
\end{align*}
To this end, for $\xi\in \mathbb R$, we calculate
\begin{align} \label{ba1}
\frac{1}{2}\frac{d}{dt} \int_{\mathbb T} (u(x,t)-\phi_{\alpha,\epsilon}(x+\xi))^2 dx
=\frac{1}{2}\frac{d}{dt} |u|^2-\int_{\mathbb T} u_t(x,t) \phi_{\alpha,\epsilon}(x+\xi) dx.
\end{align}
The first term on the right-hand side of (\ref{ba1}) can be estimated by usual way. In fact, taking the scalar product of (\ref{visKdV}) with $u$ yields
\begin{align} \label{ba2}
\frac{1}{2}\frac{d}{dt} |u|^2&= - \nu |u_x|^2 +\beta |u|^2 + (f,u) \notag\\
&\leq -\frac{3}{4}\nu |u_x|^2 + \beta |u|^2  +   \frac{1}{\nu} |A^{-\frac{1}{2}}f|^2,
\end{align}
by using Cauchy-Schwarz and Young's inequalities.

To calculate the last term on the right-hand side of (\ref{ba1}), we take the scalar product of (\ref{visKdV}) with $\phi_{\alpha,\epsilon}(x+\xi)$,
and notice that $\phi_{\alpha,\epsilon}'(x)=\alpha-\alpha b_{\epsilon}(x)$,
\begin{align}    \label{ba2'}
-(u_t,\phi_{\alpha,\epsilon}(x+\xi))
&=\nu (u_x,\phi'_{\alpha,\epsilon}(x+\xi))
-\frac{1}{2} (u^2,\phi'_{\alpha,\epsilon}(x+\xi))
-\beta (u,\phi_{\alpha,\epsilon}(x+\xi))  \notag\\
&\;\;\;+\gamma (u_x,\phi''_{\alpha,\epsilon}(x+\xi)  )
-(f,\phi_{\alpha,\epsilon}(x+\xi)) \notag\\
&\leq \frac{\nu}{4} |u_x|^2 + \nu |\phi'_{\alpha,\epsilon}|^2-\frac{\alpha}{2} |u|^2+
\frac{\alpha}{2} \int_{\mathbb T}  b_{\epsilon}(x+\xi) u^2(x,t) dx \notag\\
&\;\;\; + \beta (|u|^2 + \frac{1}{4} |\phi_{\alpha,\epsilon}|^2)
+\frac{\nu}{4}|u_x|^2  +   \frac{\gamma^2}{\nu} |\phi''_{\alpha,\epsilon}|^2
+\frac{1}{\nu}|A^{-\frac{1}{2}}f|^2+\frac{\nu}{4} |\phi'_{\alpha,\epsilon}|^2 \notag\\
&\leq \frac{\nu}{2} |u_x|^2-(\frac{\alpha}{2}-\beta) |u|^2 + \frac{1}{\nu}|A^{-\frac{1}{2}}f|^2
+c\nu \alpha^2 (L+L^2/\epsilon)  \notag\\
&\;\;\;+ c \beta \alpha^2 L^3  + \frac{c\gamma^2}{\nu} \alpha^2 L^2 /\epsilon^3
+ \frac{\alpha}{2} \int_{\mathbb T}  b_{\epsilon}(x+\xi) u^2(x,t) dx,
\end{align}
where we have used the estimate (\ref{phi1})-(\ref{phi3}) in the last inequality.

Now, substituting (\ref{ba2}) and (\ref{ba2'}) into (\ref{ba1}) implies
\begin{align} \label{ba0}
&\frac{1}{2}\frac{d}{dt} \int_{\mathbb T} (u(x,t)-\phi_{\alpha,\epsilon}(x+\xi))^2 dx   \notag\\
&\leq -\frac{\nu}{4} |u_x|^2 - (\frac{\alpha}{2}-2\beta)|u|^2
+\frac{2}{\nu}|A^{-\frac{1}{2}}f|^2 +  c\nu \alpha^2 (L+L^2/\epsilon)  \notag\\
&\;\;\;+ c \beta \alpha^2 L^3  + \frac{c\gamma^2}{\nu \epsilon^3} \alpha^2 L^2
 + \frac{\alpha}{2} \int_{\mathbb T}  b_{\epsilon}(x+\xi) u^2(x,t) dx.
\end{align}

Next we intend to apply Proposition \ref{prop1} to estimate the last term in (\ref{ba0}).
Recall that we have defined $F_{\alpha,\epsilon}(t)= \inf_{\xi \in \mathbb T} \int_{\mathbb T} (u(x,t) - \phi_{\alpha,\epsilon}(x+\xi))^2 dx.$
Notice, for each fix $t$, the mapping $\xi \mapsto \int_{\mathbb T} (u(x,t) - \phi_{\alpha,\epsilon}(x+\xi))^2 dx$ is continuous and periodic on $\mathbb R$. Thus, we can assume the minimum of $\int_{\mathbb T} (u(x,t) - \phi_{\alpha,\epsilon}(x+\xi))^2 dx$ occurs at a point $\xi^{\ast}(t)\in \mathbb T$. Then $\xi^{\ast}(t)$ must satisfy
\begin{align*}
\int_{\mathbb T} (u(x,t)-\phi_{\alpha, \epsilon}(x+\xi^{\ast}(t) ) )  \phi_{\alpha,\epsilon}'(x+\xi^{\ast}(t)) dx =0.
\end{align*}
Recall that $\phi_{\alpha,\epsilon}$ is a periodic function, thus $\int_{\mathbb T} \phi_{\alpha, \epsilon}(x+\xi^{\ast}(t) )   \phi_{\alpha,\epsilon}'(x+\xi^{\ast}(t)) dx =0$. It follows that
\begin{align*}
\int_{\mathbb T} u(x,t) \phi_{\alpha,\epsilon}'(x+\xi^{\ast}(t)) dx =0,
\end{align*}
and by (\ref{def-phi}), this is equivalent to
\begin{align*}
\int_{\mathbb T} u(x,t) (1-b_{\epsilon}(x+\xi^{\ast}(t))) dx =0.
\end{align*}
Since we assume $u$ has spatial mean value zero, one has
\begin{align*}
\int_{\mathbb T} u(x,t)b_{\epsilon}(x+\xi^{\ast}(t)) dx =0.
\end{align*}
Consequently, by Proposition \ref{prop1} and keeping in mind that $u$ and $b_{\epsilon}$ are both periodic functions with the period $L$,
we conclude that
\begin{align}   \label{ba0'}
\int_{\mathbb T} u^2(x,t)   b_{\epsilon}   (x+ \xi^{\ast}(t)) dx \leq c_0 \epsilon L |u_x|^2.
\end{align}

Now we compute
\begin{align*}
\frac{d}{dt}_{+} F_{\alpha,\epsilon} (t)
&=\limsup_{s\rightarrow t^+} \frac{F_{\alpha,\epsilon} (s)-F_{\alpha,\epsilon} (t)}{s-t}    \notag\\
&=\limsup_{s\rightarrow t^+} \frac{\int_{\mathbb T}(u(x,s) - \phi_{\alpha,\epsilon}(x+\xi^{\ast}(s)))^2 dx-\int_{\mathbb T}(u(x,t) -\phi_{\alpha,\epsilon}(x+\xi^{\ast}(t)))^2 dx}{s-t}  \notag\\
&\leq \limsup_{s\rightarrow t^+} \frac{\int_{\mathbb T}(u(x,s) - \phi_{\alpha,\epsilon}(x+\xi^{\ast}(t)))^2 dx-\int_{\mathbb T}(u(x,t) -\phi_{\alpha,\epsilon}(x+\xi^{\ast}(t)))^2 dx}{s-t}  \notag\\
&=\left(\frac{d}{dt}   \int_{\mathbb T} (u(x,t) - \phi_{\alpha,\epsilon}(x+\xi)  )^2 dx  \right)_{\xi=\xi^{\ast}(t)} \notag\\
&\leq (-\frac{\nu}{2}+\alpha c_0\epsilon L) |u_x|^2 - (\alpha-4\beta)|u|^2
+\frac{4}{\nu}|A^{-\frac{1}{2}}f|^2 +  c\nu \alpha^2 (L+L^2/\epsilon)  \notag\\
&\;\;\;+ c \beta \alpha^2 L^3  + \frac{c\gamma^2}{\nu \epsilon^3} \alpha^2 L^2,
\end{align*}
where (\ref{ba0}) and (\ref{ba0'}) were used to obtain the last inequality.

By choosing $\alpha\geq 8\beta$, and setting $\epsilon = \frac{\nu}{2\alpha c_0 L}$, we obtain
\begin{align}  \label{ba4}
\frac{d}{dt}_{+} F_{\alpha,\epsilon} (t) \leq -\frac{\alpha}{2} |u|^2 + \frac{4}{\nu} |A^{-\frac{1}{2}}f|^2 +C_0(\alpha)
\end{align}
where $C_0(\alpha)=c\nu\alpha^2 L+ c\alpha^3 L^3+ c\beta \alpha^2 L^3+ \frac{c\gamma^2}{\nu^4} \alpha^5 L^5$.

We shall look for an estimate on $F_{\alpha,\epsilon} (t)$ by using (\ref{ba4}). Indeed,
\begin{align*}
F_{\alpha,\epsilon}(t)&= \inf_{\xi \in \mathbb T} \int_{\mathbb T}(u(x,t) - \phi_{\alpha,\epsilon}(x+\xi))^2 dx \leq 2 |u(t)|^2 + 2 |\phi_{\alpha,\epsilon}|^2.
\end{align*}
It follows that
$$-\frac{\alpha}{2} |u(t)|^2
\leq -\frac{\alpha}{4}F_{\alpha,\epsilon}(t)+\frac{\alpha}{2}  |\phi_{\alpha,\epsilon}|^2
\leq -\frac{\alpha}{4}F_{\alpha,\epsilon}(t)+c\alpha^3 L^3,$$
where (\ref{phi1}) was used.
Substituting the above inequality into (\ref{ba4}) yields
\begin{align*}
\frac{d}{dt}_{+} F_{\alpha,\epsilon} (t) \leq -\frac{\alpha}{4}F_{\alpha,\epsilon}(t)+c \alpha^3 L^3
+\frac{4}{\nu} |A^{-\frac{1}{2}}f|^2 +C_0(\alpha).
\end{align*}
By Gronwall's inequality, we deduce, for $t\geq t_0$,
\begin{align}   \label{tt0}
F_{\alpha,\epsilon}(t) \leq e^{-\frac{\alpha}{4}(t-t_0)} F_{\alpha,\epsilon}(t_0)+\frac{4}{\alpha} (1-e^{-\frac{\alpha}{4}(t-t_0)   })
(c \alpha^3 L^3+\frac{4}{\nu} |A^{-\frac{1}{2}}f|^2 +C_0(\alpha)).
\end{align}

This implies
\begin{align*}
\limsup_{t \rightarrow \infty} F_{\alpha,\epsilon}(t) \leq
\frac{4}{\alpha}
(c \alpha^3 L^3+\frac{4}{\nu} |A^{-\frac{1}{2}}f|^2 +C_0(\alpha)).
\end{align*}
Consequently,
\begin{align*}
\limsup_{t\rightarrow \infty} |u(t)|^2 &\leq \frac{8}{\alpha}
(c \alpha^3 L^3+\frac{4}{\nu} |A^{-\frac{1}{2}}f|^2 +C_0(\alpha))+ 2 |\phi_{\alpha,\epsilon}|^2 \notag\\
&\leq \frac{8}{\alpha}
(c \alpha^3 L^3+\frac{4}{\nu} |A^{-\frac{1}{2}}f|^2 +C_0(\alpha))
\end{align*}
by virtue of (\ref{phi1}), where $\alpha\geq 8\beta$.

Finally, if we choose $\alpha=8\beta$, then for $t$ sufficiently large, $u(t)$ enters a ball in $H$ with the radius
\begin{align} \label{rho}
\rho^2:=c (\beta^2 L^3+\frac{1}{\nu \beta} |A^{-\frac{1}{2}}f|^2+ \nu\beta^2 L+ \beta^3 L^3+ \frac{\gamma^2}{\nu^4} \beta^5 L^5 ).
\end{align}
For simplifying the expression of $\rho$, we can assume $\nu\leq 1$ and $L, \beta, \gamma\geq 1$, then
$$\rho^2=c (\frac{1}{\nu \beta} |A^{-\frac{1}{2}}f|^2+\frac{\gamma^2}{\nu^4} \beta^5 L^5 ).$$
This gives $\rho \sim L^{5/2}, \; \rho \sim \frac{1}{\nu^2},\; \rho \sim \beta^{5/2}, \; \rho \sim \gamma.$ The proof of Theorem \ref{thm-ball} is complete.
\end{proof}

\begin{remark} \label{rem2}
By virtue of (\ref{rho}), we see that the dispersion $\gamma u_{xxx}$ in the KBS equation (\ref{visKdV}) tends to amplify the size of the absorbing ball. In particular, if we remove $\gamma u_{xxx}$ from the KBS, then $\rho \sim L^{3/2}$ (instead of $\rho \sim L^{5/2}$ for the KBS). This agrees with the estimate in Theorem 2 \cite{Goodman-94} for the Burgers-Sivashinsky equation $u_t+uu_x=u+u_{xx}$.
\end{remark}

\smallskip

\begin{remark} \label{rem3}
By switching $t$ and $t_0$ in (\ref{tt0}), we obtain, for $t\leq t_0$,
\begin{align} \label{tt1}
F_{\alpha,\epsilon}(t) \geq e^{\frac{\alpha}{4}(t_0-t)} F_{\alpha,\epsilon}(t_0)-\frac{4}{\alpha} (e^{\frac{\alpha}{4}(t_0-t)}-1)
(c \alpha^3 L^3+\frac{4}{\nu} |A^{-\frac{1}{2}}f|^2 +C_0(\alpha)).
\end{align}
Assume $u(t): \mathbb R \rightarrow H$ is a global solution outside the global attractor. By the theory of attractors (see, e.g. \cite{Tbook}), we know that $|u(t)|$ is not universal bounded for all $t\in \mathbb R$. Therefore, $F_{\alpha,\epsilon}(t)$ is not uniformly bounded on $\mathbb R$. So, we can choose $t_0$ such that $F_{\alpha,\epsilon}(t_0)>\frac{4}{\alpha}
(c \alpha^3 L^3+\frac{4}{\nu} |A^{-\frac{1}{2}}f|^2 +C_0(\alpha))$, and for such $t_0$, (\ref{tt1}) implies that
\begin{align*}
\liminf_{t\rightarrow -\infty} \frac{\log F_{\alpha,\epsilon}(t)}{|t|} \geq \frac{\alpha}{4}.
\end{align*}
It follows that
\begin{align*}
\liminf_{t\rightarrow -\infty} \frac{\log |u(t)|}{|t|} \geq \frac{\alpha}{4}.
\end{align*}
Since $\alpha\geq 8\beta$ is an arbitrary large number, we conclude that the growth rate of $|u(t)|$ as $t\rightarrow -\infty$ is faster than exponential, if we assume $u(t)$ is a global solution on $\mathbb R$ outside the global attractor.
In the next section, we shall show that in fact, for the KBS equation (\ref{visKdV}) with $\nu>0$, there does not exist a global solution outside the global attractor.
\end{remark}

\smallskip

\subsection{Blow-up of solutions backward in finite time}
This subsection is devoted to completing the proof of Theorem \ref{main}. We shall show that, if $\nu>0$, then any solution $u(t): [0,\infty)\rightarrow H$ of (\ref{visKdV}) outside the global attractor cannot be extended to a global solution on $\mathbb R$. Our argument follows the approach in \cite{KS-backward-05}. First let us prove the following lemma.

\begin{lemma} \label{lem-1}
Assume $\nu>0$ and $f\in V'$. Let $\alpha\geq 8\beta$ and $R\geq \left(  \frac{4(\frac{4}{\nu} |A^{-\frac{1}{2}}f|^2 +C_0(\alpha))}{\alpha}  \right)^{\frac{1}{2}}$. Assume that $u(t):[t_1,t_2]\rightarrow H$ is a solution of (\ref{visKdV}) such that $|u(t_1)|\leq 2R$ and $|u(t)|\geq R$ for $t_1 \leq t \leq t_2$. Then
\begin{align*}
t_2-t_1\leq \frac{30}{\alpha}+\frac{c\alpha L^3}{R^2}.
\end{align*}
\end{lemma}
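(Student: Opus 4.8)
The plan is to combine the differential inequality \eqref{ba4} for $F_{\alpha,\epsilon}(t)$ with the elementary relation between $F_{\alpha,\epsilon}$ and $|u|^2$. Recall that $\tfrac{d}{dt}_+ F_{\alpha,\epsilon}(t)\le -\tfrac{\alpha}{2}|u(t)|^2 + \tfrac{4}{\nu}|A^{-1/2}f|^2 + C_0(\alpha)$, and that on the interval $[t_1,t_2]$ we have $|u(t)|\ge R$. The hypothesis on $R$ guarantees $\tfrac{\alpha}{2}R^2 \ge 2\big(\tfrac{4}{\nu}|A^{-1/2}f|^2 + C_0(\alpha)\big)$, so on $[t_1,t_2]$ one gets $\tfrac{d}{dt}_+ F_{\alpha,\epsilon}(t) \le -\tfrac{\alpha}{4}|u(t)|^2 \le -\tfrac{\alpha}{4}R^2 < 0$; hence $F_{\alpha,\epsilon}$ is (absolutely continuous and) strictly decreasing on $[t_1,t_2]$, and in particular $F_{\alpha,\epsilon}(t)\le F_{\alpha,\epsilon}(t_1)$ there.

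Next I would bound $F_{\alpha,\epsilon}(t_1)$ from above using $|u(t_1)|\le 2R$ and \eqref{phi1}: $F_{\alpha,\epsilon}(t_1)\le 2|u(t_1)|^2 + 2|\phi_{\alpha,\epsilon}|^2 \le 8R^2 + c\alpha^2 L^3$. On the other hand, since $F_{\alpha,\epsilon}\ge 0$, integrating the inequality $\tfrac{d}{dt}_+ F_{\alpha,\epsilon}(t)\le -\tfrac{\alpha}{4}|u(t)|^2$ from $t_1$ to $t_2$ gives
\begin{align*}
\frac{\alpha}{4}\int_{t_1}^{t_2} |u(t)|^2\, dt \le F_{\alpha,\epsilon}(t_1) - F_{\alpha,\epsilon}(t_2) \le F_{\alpha,\epsilon}(t_1) \le 8R^2 + c\alpha^2 L^3.
\end{align*}
Since $|u(t)|^2\ge R^2$ on the interval, the left side is at least $\tfrac{\alpha}{4}R^2 (t_2-t_1)$, so $t_2-t_1 \le \tfrac{4}{\alpha R^2}\big(8R^2 + c\alpha^2 L^3\big) = \tfrac{32}{\alpha} + \tfrac{c\alpha L^3}{R^2}$, which is the claimed bound (up to adjusting the constant $32$ down to $30$, which I would obtain by a slightly sharper accounting — e.g. using $F_{\alpha,\epsilon}(t_1)\le 2|u(t_1)|^2 + 2|\phi_{\alpha,\epsilon}|^2$ more carefully, or by noting $F_{\alpha,\epsilon}(t_1)\le (|u(t_1)|+|\phi_{\alpha,\epsilon}|)^2$ and absorbing the $\phi$-term into the $C_0(\alpha)$-type remainder; in any case the exact numeric constant is immaterial).

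The main technical point to handle with care is the use of the one-sided (Dini) derivative $\tfrac{d}{dt}_+ F_{\alpha,\epsilon}$ rather than a classical derivative: $F_{\alpha,\epsilon}$ is only Lipschitz (it is an infimum of smooth functions of $\xi$), so to pass from the pointwise bound on $\tfrac{d}{dt}_+ F_{\alpha,\epsilon}$ to the integrated inequality I would invoke the standard fact that a continuous function whose upper Dini derivative is $\le g(t)$ almost everywhere (or everywhere) and which is locally Lipschitz satisfies $F(t_2)-F(t_1)\le \int_{t_1}^{t_2} g$. This is exactly the regularity already exploited in deriving \eqref{ba4} in the proof of Theorem \ref{thm-ball}, so no new difficulty arises; everything else is bookkeeping with the constants $R$, $\alpha$, $L$, $\nu$, $\gamma$ already packaged into $C_0(\alpha)$.
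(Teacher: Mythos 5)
Your argument is correct and is essentially the paper's proof: integrate the differential inequality \eqref{ba4} over $[t_1,t_2]$, use $|u(t)|\ge R$ together with the hypothesis $\tfrac{\alpha}{4}R^2\ge \tfrac{4}{\nu}|A^{-\frac{1}{2}}f|^2+C_0(\alpha)$ to absorb the constant terms, and bound $F_{\alpha,\epsilon}(t_1)\le 8R^2+c\alpha^2 L^3$ from $|u(t_1)|\le 2R$ and \eqref{phi1}. The only deviation is that you discard $F_{\alpha,\epsilon}(t_2)$ via $F_{\alpha,\epsilon}(t_2)\ge 0$ and so land at $\tfrac{32}{\alpha}$; the paper instead retains the lower bound $F_{\alpha,\epsilon}(t_2)\ge \tfrac12|u(t_2)|^2-|\phi_{\alpha,\epsilon}|^2\ge \tfrac12 R^2-c\alpha^2 L^3$ (inequality \eqref{ba8}), which is exactly the ``sharper accounting'' you anticipate and yields the stated $\tfrac{30}{\alpha}+\tfrac{c\alpha L^3}{R^2}$, the precise constant being immaterial for the subsequent application anyway.
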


\begin{proof}
Recall  $F_{\alpha,\epsilon}(t)= \inf_{\xi \in \mathbb T} \int_{\mathbb T} (u(x,t) - \phi_{\alpha,\epsilon}(x+\xi))^2 dx.$ It follows that
\begin{align}  \label{ba5}
F_{\alpha,\epsilon}(t_1)\leq 2(|u(t_1)|^2 + |\phi_{\alpha,\epsilon}|^2)  \leq 8R^2+c\alpha^2 L^3,
\end{align}
where we have used (\ref{phi1}).

Notice that the inequality (\ref{ba4}) states
$\frac{d}{dt}_+ F_{\alpha,\epsilon}(t)
\leq -\frac{\alpha}{2} |u|^2 +\frac{4}{\nu} |A^{-\frac{1}{2}}f|^2 +C_0(\alpha) $. Integrating between $t_1$ and $t_2$ and using the fact $|u(t)|\geq R$ for $t_1 \leq t \leq t_2$, we infer
\begin{align}  \label{ba6}
F_{\alpha,\epsilon}(t_2)-F_{\alpha,\epsilon}(t_1)
\leq \left(-\frac{\alpha}{2} R^2
+ (\frac{4}{\nu} |A^{-\frac{1}{2}}f|^2 +C_0(\alpha))\right)(t_2-t_1).
\end{align}
Combining (\ref{ba5}) and (\ref{ba6}) yields
\begin{align}  \label{ba7}
F_{\alpha,\epsilon}(t_2) \leq 8R^2+c\alpha^2 L^3+\left(-\frac{\alpha}{2} R^2
+ (\frac{4}{\nu} |A^{-\frac{1}{2}}f|^2 +C_0(\alpha))\right)(t_2-t_1).
\end{align}

On the other hand, since $F_{\alpha,\epsilon}(t)= \int_{\mathbb T}(u(x,t) - \phi_{\alpha,\epsilon}(x+\xi^{\ast}(t)))^2 dx$, then by (\ref{phi1}) we have
\begin{align} \label{ba8}
F_{\alpha,\epsilon}(t_2)\geq \frac{1}{2} |u(t_2)|^2-|\phi_{\alpha,\epsilon}|^2
\geq \frac{1}{2}R^2-c\alpha^2 L^3.
\end{align}

It follows from (\ref{ba7}) and (\ref{ba8}) that
\begin{align*}
\frac{1}{2}R^2-c\alpha^2 L^3 \leq 8R^2+c\alpha^2 L^3+\left(-\frac{\alpha}{2} R^2
+ (\frac{4}{\nu} |A^{-\frac{1}{2}}f|^2 +C_0(\alpha))\right)(t_2-t_1).
\end{align*}
This implies
\begin{align*}
t_2-t_1 &\leq \frac{\frac{15}{2}R^2+c\alpha^2 L^3}{ \frac{\alpha}{2} R^2
- (\frac{4}{\nu} |A^{-\frac{1}{2}}f|^2 +C_0(\alpha)) } \notag\\
&\leq \frac{30 R^2+c\alpha^2 L^3}{\alpha R^2} \notag\\
&=\frac{30}{\alpha}+\frac{c\alpha L^3}{R^2},
\end{align*}
 due to the fact $\frac{1}{4}\alpha R^2 \geq \frac{4}{\nu} |A^{-\frac{1}{2}}f|^2 +C_0(\alpha)$ from assumption.
\end{proof}

Now we can complete the proof of the second part of Theorem \ref{visKdV}: the blow-up of solutions of the KBS equation (\ref{visKdV}) backward in finite time, if $\nu>0$.

\begin{proof} (Theorem \ref{main} (ii))
Assume that there exists a global solution $u(t):\mathbb R \rightarrow H$ which does not belong to the global attractor. By Remark \ref{rem3}, $|u(t)|$ grows faster than exponentially backward in time. Then, there exists a sequence of times $\{t_j\}_{j=0}^{\infty}$ with $t_{j+1}\leq t_j$, such that
$|u(t_{j+1})|\leq 2^{j+1}R_0$ and  $|u(t)|\geq 2^{j} R_0$ for $t_{j+1} \leq t \leq t_j$, $j=0, 1,2,\cdots$, where $R_0>0$ will be chosen later.

By taking $R=2^j R_0$ in Lemma \ref{lem-1}, we have, if $4^{j-1} R_0^2 \alpha_j  \geq \frac{4}{\nu}|A^{-\frac{1}{2}} f |^2  +C_0(\alpha_j)$ and $\alpha_j\geq 8\beta$, then
\begin{align} \label{star}
\sum_{j=0}^{\infty} (t_{j}-t_{j+1}) \leq 30\sum_{j=0}^{\infty}  \frac{1}{\alpha_j }
+\sum_{j=0}^{\infty}\frac{c\alpha_j L^3}{4^{j} R_0^2}.
\end{align}

Now we carefully select the values of $\alpha_j$ and $R_0$ to force the right-hand side of (\ref{star}) to be finite.
We choose $\alpha_j = 2^{j/2} \cdot 8\beta R_0^{1/4}$. Recall $C_0(\alpha)=c\nu\alpha^2 L+ c\alpha^3 L^3+ c\beta \alpha^2 L^3+ \frac{c\gamma^2}{\nu^4} \alpha^5 L^5$.
Then, $C_0(\alpha_j)=c(2^j \nu  \beta^2 R_0^{1/2}  L+ 2^{3j/2} \beta^3 R_0^{3/4} L^3 + 2^j \beta^3 R_0^{1/2} L^3+\frac{\gamma^2}{\nu^4}2^{5j/2}\beta^5 R_0^{5/4} L^5).$
In order to match the requirement
$4^{j-1} R_0^2 \alpha_j  \geq \frac{4}{\nu}|A^{-\frac{1}{2}} f |^2  +C_0(\alpha_j)$ with $\alpha_j=2^{j/2} \cdot 8\beta R_0^{1/4}$, for all $j=0, 1, 2, \cdots$, i.e.,
\begin{align*}
R_0^{\frac{9}{4}} \geq \frac{2}{\nu \beta}|A^{-\frac{1}{2}}f|^2 \cdot 2^{-\frac{5}{2}j}+
c(2^{-\frac{3}{2}j}\nu \beta R_0^{\frac{1}{2}} L+  2^{-j}\beta^2 R_0^{\frac{3}{4}} L^3 +2^{-\frac{3}{2}j}\beta^2 R_0^{\frac{1}{2}} L^3 + \frac{\gamma^2}{\nu^4} \beta^4 R_0^{\frac{5}{4}} L^5 ),
\end{align*}
for all $j=0, 1, 2 , \cdots$, a sufficient condition is
\begin{align} \label{con-R}
R_0 \geq \frac{2}{\nu\beta}|A^{-\frac{1}{2}} f |^2+c\cdot \max\left\{\nu \beta L, \;  \beta^2  L^3 ,\;  \frac{\gamma^2}{\nu^4}\beta^4 L^5\right\}+1.
\end{align}

With $\alpha_j=2^{j/2}\cdot 8\beta  R_0^{1/4}$ and $R_0$ satisfying (\ref{con-R}), we obtain from (\ref{star}) that
\begin{align*}
\sum_{j=0}^{\infty} (t_{j}-t_{j+1})
\leq \frac{15}{4}\sum_{j=0}^{\infty}  \frac{1}{2^{j/2}\beta R_0^{1/4}}
+\sum_{j=0}^{\infty}\frac{c  2^{j/2} \beta R_0^{1/4} L^3}{4^{j} R_0^2}=C(\frac{1}{\beta R_0^{1/4}}+\frac{\beta L^3}{R_0^{7/4}}).
\end{align*}
This can be interpreted as: if $|u(t_0)|\geq R_0$, and let the time $t$ goes backwards, then the lifespan of the solution backward in time from $t=t_0$  is shorter than  $C(\frac{1}{\beta R_0^{1/4}}+\frac{\beta L^3}{R_0^{7/4}}) $.
Therefore, there does not exist a global solution $u(t):\mathbb R \rightarrow H$ of (\ref{visKdV}) unless $u$ belongs to the global attractor.
\end{proof}

We remark that the proof of Theorem \ref{main} implies that $S(t)H$ is not dense in $H$ for any $t>0$, where $S(t)$ is the solution semigroup of the KBS equation (\ref{visKdV}) with $\nu>0$. This fact is contrast to the Bardos-Tartar conjecture on the 2D NSE \cite{Bardos-Tartar}. More precisely, we have the following result.

\begin{corollary} \label{corr}
Assume $\nu>0$ and $f\in V'$. For any $t'>0$, there exist $r(t')>0$ such that $|S(t)u_0|< r(t')$, for all $u_0\in H$, and all $t\geq t'$, where $S(t)$ is the solution semigroup of the KBS equation (\ref{visKdV}).
\end{corollary}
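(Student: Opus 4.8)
The plan is to deduce Corollary~\ref{corr} from the quantitative backward-lifespan bound that is already established in the proof of Theorem~\ref{main}(ii), combined with an elementary forward-in-time energy estimate. For $R>0$ satisfying~(\ref{con-R}) (with $R$ in place of $R_0$), set $\alpha_j=2^{j/2}\cdot 8\beta R^{1/4}$ as in that proof and write $\tau(R):=\sum_{j\ge0}\big(\frac{30}{\alpha_j}+\frac{c\alpha_j L^3}{4^j R^2}\big)=C\big(\frac{1}{\beta R^{1/4}}+\frac{\beta L^3}{R^{7/4}}\big)$; recall that~(\ref{con-R}) holds once $R$ exceeds a fixed threshold $R_\ast=R_\ast(\nu,\beta,\gamma,L,f)$, and that $\tau(R)\to0$ as $R\to\infty$. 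Given $t'>0$, I would fix $R_0=R_0(t')$ with $R_0\ge R_\ast$ and $\tau(R_0)<t'/2$, and put $r(t'):=e^{\beta t'}\big(R_0^2+\tfrac{1}{\beta\nu}|A^{-\frac{1}{2}}f|^2\big)^{1/2}$, which is $>R_0$. As in the rest of the section we take $\beta>0$, the case $\beta\le0$ being easier.

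Now let $u_0\in H$, $u(\cdot)=S(\cdot)u_0$ on $[0,\infty)$, and fix $t\ge t'$. If $|u(t)|\le R_0$ then $|u(t)|<r(t')$ and there is nothing to prove, so assume $|u(t)|>R_0$; pick $N\ge0$ with $2^N R_0\le |u(t)|<2^{N+1}R_0$. First consider the case in which $|u(s)|<R_0$ for some $s\in[0,t)$. Let $t_\ast:=\sup\{s\in[0,t):|u(s)|<R_0\}$; since $|u(t)|>R_0$, continuity of $s\mapsto|u(s)|$ gives $t_\ast<t$, $|u(t_\ast)|=R_0$, and $|u(s)|\ge R_0$ for all $s\in[t_\ast,t]$. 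I would then iterate Lemma~\ref{lem-1} backward from $t$ along the dyadic energy levels $2^0R_0,\dots,2^NR_0$, with parameter $\alpha_j$ at level $j$, exactly as in the proof of Theorem~\ref{main}(ii): the rung at level $j$ is the interval $[p_j,p_{j+1}]$ where $[p_j,t]$ is the connected component of $\{s\in[t_\ast,t]:|u(s)|\ge 2^jR_0\}$ containing $t$, and its hypothesis $|u(t_1)|\le 2R$ is met because an interior left endpoint carries energy $2^jR_0$ while the bottom rung ends at $t_\ast$ with $|u(t_\ast)|=R_0\le 2R_0$. Summing the time increments supplied by Lemma~\ref{lem-1} over the levels gives $t-t_\ast\le\tau(R_0)<t'/2$. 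Finally, dropping the dissipative term in the energy identity~(\ref{ba2}) yields $\frac{d}{dt}|u|^2\le 2\beta|u|^2+\frac{2}{\nu}|A^{-\frac{1}{2}}f|^2$, so Gronwall's inequality on $[t_\ast,t]$ with $|u(t_\ast)|=R_0$ gives
\[
|u(t)|^2\le e^{2\beta(t-t_\ast)}R_0^2+\tfrac{1}{\beta\nu}\big(e^{2\beta(t-t_\ast)}-1\big)|A^{-\frac{1}{2}}f|^2,
\]
which, since $t-t_\ast<t'$, is strictly less than $e^{2\beta t'}\big(R_0^2+\tfrac{1}{\beta\nu}|A^{-\frac{1}{2}}f|^2\big)=r(t')^2$; hence $|u(t)|<r(t')$.

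It remains to rule out the alternative, namely $|u(s)|\ge R_0$ for all $s\in[0,t]$ (so in particular $|u_0|\ge R_0$). I claim this is impossible when $t\ge t'$. The backward ladder from $t$ now reaches the initial time $0$ while still at some dyadic level $2^{j^\ast}R_0$, where $j^\ast$ is the largest index with $|u(s)|\ge 2^{j^\ast}R_0$ for all $s\in[0,t]$; the rungs from $t$ down to that level consume at most $\tau(R_0)$ of time. For the remaining portion of $[0,t]$ I would run a second ladder \emph{forward} from $t=0$, anchored at the dyadic level $2^mR_0\le|u_0|<2^{m+1}R_0$, so that its top rung satisfies $|u(0)|\le 2\cdot 2^m R_0$; these rungs, descending to level $j^\ast$ (at which point $|u|\ge 2^{j^\ast}R_0$ throughout forces the ladder to close), consume at most another $\tau(R_0)$. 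Hence $t\le 2\tau(R_0)<t'$, contradicting $t\ge t'$. So this case does not occur, we are necessarily in the case treated in the previous paragraph, and $|u(t)|<r(t')$ in every case, proving the corollary.

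The only genuinely delicate point is this last case when $|u_0|$ is large: one must ensure that \emph{every} invocation of Lemma~\ref{lem-1} has its hypothesis $|u(t_1)|\le 2R$ satisfied. This is what forces one to anchor each ladder at the dyadic level matching the energy at the endpoint it is launched from ($|u(t)|$ for the backward ladder, $|u_0|$ for the forward one) and to start a fresh ladder at the initial time rather than try to push the backward one past it. Everything else --- the attainment of the infima defining $F_{\alpha,\epsilon}$ and of the ladder times, the energy inequality~(\ref{ba2}), the Poincar\'e inequality, the choice $\alpha_j=2^{j/2}\cdot 8\beta R_0^{1/4}$, and the fact that~(\ref{con-R}) secures the hypotheses of Lemma~\ref{lem-1} at all levels simultaneously --- is already available from the proof of Theorem~\ref{main}.
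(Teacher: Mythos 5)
Your argument is correct, and it uses the same machinery as the paper --- Lemma~\ref{lem-1}, the dyadic choice $\alpha_j=2^{j/2}\cdot 8\beta R_0^{1/4}$ under~(\ref{con-R}), and the fact that the total ladder time $\tau(R_0)=C\bigl(\tfrac{1}{\beta R_0^{1/4}}+\tfrac{\beta L^3}{R_0^{7/4}}\bigr)$ can be made smaller than $t'$ --- but it assembles it differently from the paper. The paper's own proof is essentially two lines: it quotes the statement recorded at the end of the proof of Theorem~\ref{main}(ii) (``if $|u(t_0)|\ge R_0$ then the backward lifespan from $t_0$ is shorter than $\tau(R_0)$''), chooses $r(t')$ with $\tau(r(t'))<t'$, and concludes, taking that lifespan statement as already established for arbitrary (non-global) solutions. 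You instead re-run the ladder directly in the forward-semigroup setting on $[0,t]$, and in doing so you treat explicitly the two boundary situations that the quoted statement, read literally, glosses over: (i) the energy may cross the threshold $R_0$ from below at a last exit time $t_\ast<t$, in which case the ladder only controls $t-t_\ast$ and you convert this into a bound on $|u(t)|$ by the forward Gronwall estimate from~(\ref{ba2}); and (ii) the energy may stay above $R_0$ on all of $[0,t]$ with $|u(0)|$ much larger than the current rung's level, which you handle by anchoring a second ladder at the dyadic level of $|u(0)|$ so that every invocation of Lemma~\ref{lem-1} really has $|u(t_1)|\le 2R$ (in fact that forward ladder alone already covers $[0,t]$, so your bound $t\le 2\tau(R_0)$ is slightly wasteful but harmless). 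What the paper's route buys is brevity and a radius determined solely by $\tau(r(t'))<t'$; what yours buys is a self-contained and airtight verification of the eventual uniform bound, at the cost of the larger radius $r(t')=e^{\beta t'}\bigl(R_0^2+\tfrac{1}{\beta\nu}|A^{-\frac12}f|^2\bigr)^{1/2}$, which is immaterial for the corollary and for the non-density of $S(t)H$.
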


\begin{proof}
The result can be directly obtained by the following statement in the proof of Theorem \ref{main}:  if $u(t)$ is a solution of (\ref{visKdV}) with
$|u(t_0)|\geq R_0$, then the lifespan of this solution backward in time starting at $t=t_0$ is shorter than $C(\frac{1}{\beta R_0^{1/4}}+\frac{\beta L^3}{R_0^{7/4}})$. Now, given any $t'>0$, we choose $r(t')$ sufficiently large so that $C(\frac{1}{\beta r(t')^{1/4}}+\frac{\beta L^3}{r(t')^{7/4}}) < t'$. Then, for any $u_0\in H$, the energy $|S(t)u_0|< r(t')$ for all $t\geq t'$, i.e., the trajectories of all solutions stay in a ball of radius $r(t')$ in $H$ for all $t\geq t'$.
\end{proof}

\bigskip

\bigskip

\section{Additional Examples}   \label{sec-dis}
In this section we discuss the backward behavior of another three dissipative evolution equations, in order to shed light on different types of backward blow-up mechanism. We also mention some open problems as well.

\subsection{Damped driven nonlinear Schr\"odinger equation}  \label{DNLS}
In this subsection we consider the 1D damped driven nonlinear Schr\"odinger equation on torus (circle)
\begin{align} \label{dNLS}
i u_t+u_{xx}+|u|^2 u + i \lambda u =f , \;\; x\in \mathbb T,
\end{align}
where $\lambda>0$, so that $i \lambda u$ is a weak damping.
The conservation form of this equation ($\lambda=0$ and $f=0$)
has been extensively studied as a fundamental equation
in modern mathematical physics (see, e.g. \cite{Bou-1,Zakharov}). For the case $\lambda>0$ and $f\in L^2_{per}(\mathbb T)$, it is shown in \cite{Ghi-NLS}  that (\ref{dNLS}) is globally well-posed in $H^1_{per}(\mathbb T)$ for $t\in \mathbb R$ and possesses a weak attractor in $H^1_{per}(\mathbb T)$, and this result has been improved in \cite{Wang-NLS} that the weak attractor is in fact a global attractor $\mathcal A$ in the usual strong topology sense, and that it has finite Hausdorff and fractal dimensions. Also, it is shown in \cite{Gou-NLS} that the global attractor $\mathcal A$ of (\ref{dNLS}) is smooth (i.e. $C^{\infty}$) provided the forcing term is smooth. Furthermore, it is proved in \cite{Oliver-Titi}, by employing the Gevrey class technique, that $\mathcal A$ is in fact contained in a subclass of the real analytic functions provided the driving term $f$ is real analytic. Finally, a recent work \cite{Jolly-Sadigov-Titi} shows that $\mathcal A$ is embedded in the long-time dynamics of a determining form (an ODE), and there is a one-to-one identification with the trajectories in $\mathcal A$ and the steady states of the determining form.

The backward behavior of the solution $u(t)$ of (\ref{dNLS}), with $u_0\in H^1_{per}(\mathbb T)$, can be described as follows: if $u(t)$ does not belong to the global attractor, then as $t\rightarrow -\infty$, the $L^2$-norm of $u(t)$ grows exponentially fast, while the $H^1$-norm grows at most exponentially in time. In particular, we have the following:
\begin{theorem} \label{thmNLS}
Assume $f\in L^2_{per}(\mathbb T)$. Let $u(t)$ be a global solution of (\ref{dNLS}) in $H^1_{per}(\mathbb T)$ for all $t\in \mathbb R$, with the initial data $u_0\in H^1_{per}(\mathbb T)$, such that $u$ does not belong to the global attractor. Then
\begin{align}
c_1 e^{-\frac{1}{2}\lambda t}   \leq     |u(t)|   \leq   C_1 e^{-\frac{3}{2}\lambda t}, \text{\;\;as\;\;}  t\rightarrow -\infty,
\end{align}
where $c_1$ and $C_1$ depends on $|u_0|$, $|f|$ and $\lambda$. In addition, the behavior of the energy $E(t)$ backward in time satisfies
\begin{align*}
E(t):=|u_x(t)|^2 - \frac{1}{2} |u(t)|_{L^4}^4  \leq C_2 e^{-3\lambda t}, \text{\;\;as\;\;} t\rightarrow -\infty,
\end{align*}
where $C_2$ depends on $\|u_0\|$, $|f|$ and $L$. Furthermore,
\begin{align*}
\|u(t)\| \leq C_3 e^{-\frac{9}{2}\lambda t}, \text{\;\;as\;\;} t\rightarrow -\infty,
\end{align*}
where $C_3$ depends on $|u_0|$, $|f|$ and $\lambda$.
\end{theorem}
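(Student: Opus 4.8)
The plan is to run, backward in time, the two classical a priori balances for the damped driven NLS — the $L^2$ (mass) balance and the Hamiltonian (energy) balance — and then close a Gagliardo--Nirenberg interpolation to convert the bound on $E(t)$ into one on $\|u(t)\|$. First I would take the $L^2(\mathbb T;\mathbb C)$ pairing of \eqref{dNLS} with $u$ and extract the imaginary part; since $\int_{\mathbb T} u_{xx}\bar u\,dx$ and $\int_{\mathbb T}|u|^2u\bar u\,dx$ are real, this yields
\[
\frac{1}{2}\frac{d}{dt}|u|^2 + \lambda|u|^2 = \mathrm{Im}\int_{\mathbb T} f\bar u\,dx, \qquad \Bigl|\mathrm{Im}\int_{\mathbb T} f\bar u\,dx\Bigr|\le |f|\,|u|.
\]
Splitting the forcing by Young's inequality in the two directions gives $\frac{d}{dt}|u|^2 + 3\lambda|u|^2 \ge -\frac1\lambda|f|^2$ and $\frac{d}{dt}|u|^2 + \lambda|u|^2 \le \frac1\lambda|f|^2$. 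Multiplying by $e^{3\lambda t}$ resp. $e^{\lambda t}$ and integrating backward from $0$ produces, for $t\le 0$, $|u(t)|^2 \le e^{-3\lambda t}\bigl(|u_0|^2+\frac1{3\lambda^2}|f|^2\bigr)$, i.e. $|u(t)|\le C_1 e^{-\frac32\lambda t}$, and $|u(t)|^2 \ge e^{\lambda(t_0-t)}\bigl(|u(t_0)|^2-\frac1{\lambda^2}|f|^2\bigr)$ for $t\le t_0$. The lower bound $|u(t)|\ge c_1 e^{-\frac12\lambda t}$ with $c_1>0$ then follows as soon as $|u(t_0)|>|f|/\lambda$ for some $t_0$; this is where the hypothesis that $u$ lies off the global attractor is used — a global solution whose $L^2$ norm never exceeds $|f|/\lambda$ would be a bounded complete trajectory trapped in the $L^2$-absorbing ball, hence contained in the attractor (by the attractor theory for \eqref{dNLS} recalled in the Introduction), a contradiction.

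Next I would pair \eqref{dNLS} with $u_t$ and take real parts (the standard Hamiltonian computation): $\mathrm{Re}\int iu_t\bar u_t\,dx=0$, $\mathrm{Re}\int u_{xx}\bar u_t\,dx=-\frac12\frac{d}{dt}|u_x|^2$, $\mathrm{Re}\int|u|^2u\bar u_t\,dx=\frac14\frac{d}{dt}|u|_{L^4}^4$, and $\mathrm{Re}\int f\bar u_t\,dx=\frac{d}{dt}\mathrm{Re}\int f\bar u\,dx$ since $f$ is time-independent, while the damping term, after substituting $u_t$ from the equation, equals $\lambda\bigl(-E(t)-\mathrm{Re}\int f\bar u\,dx+\frac12|u|_{L^4}^4\bigr)$. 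Writing $g(t):=\mathrm{Re}\int_{\mathbb T}f\bar u(t)\,dx$ and reassembling, I get
\[
\frac{d}{dt}\Bigl[e^{2\lambda t}\bigl(E(t)+2g(t)\bigr)\Bigr] = \lambda e^{2\lambda t}\,|u|_{L^4}^4 + 2\lambda e^{2\lambda t}\,g(t).
\]
Integrating backward from $0$ to $t\le 0$, I discard $\lambda e^{2\lambda s}|u|_{L^4}^4\ge 0$ (it has the favorable sign for an upper bound on $E$), and I control $|g(s)|\le|f|\,|u(s)|\le C_1|f|\,e^{-\frac32\lambda s}$ by the first step, so $e^{2\lambda s}|g(s)|\le C_1|f|\,e^{\frac12\lambda s}$ is integrable on $(-\infty,0]$. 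This gives $e^{2\lambda t}(E(t)+2g(t))\le E(0)+2g(0)+4C_1|f|$, hence $E(t)\le Ce^{-2\lambda t}+2|f|\,|u(t)|\le C_2 e^{-3\lambda t}$ for $t\le 0$, using once more $|u(t)|\le C_1 e^{-\frac32\lambda t}$ together with $e^{-2\lambda t},e^{-\frac32\lambda t}\le e^{-3\lambda t}$ for $t\le 0$.

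For the last assertion I would use $|u_x|^2 = E + \frac12|u|_{L^4}^4$ and the one-dimensional Gagliardo--Nirenberg inequality $|u|_{L^4}^4 \le c\bigl(|u|^3|u_x| + |u|^4\bigr)$; Young's inequality absorbs $\frac14|u_x|^2$ and leaves
\[
\|u\|^2 = |u|^2 + |u_x|^2 \le \frac{4}{3}\,E + c\bigl(|u|^2 + |u|^4 + |u|^6\bigr).
\]
Inserting $E(t)\le C_2 e^{-3\lambda t}$ and $|u(t)|\le C_1 e^{-\frac32\lambda t}$, the term $|u|^6\sim e^{-9\lambda t}$ dominates all the others as $t\to-\infty$, and I conclude $\|u(t)\|\le C_3 e^{-\frac92\lambda t}$.

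The routine parts are the two energy balances and the interpolation. The step I expect to require genuine care is the energy balance: the complex-valued integrations by parts must be tracked precisely so that the damping term reassembles into the closed identity for $e^{2\lambda t}(E+2g)$, and one must notice that although $|u|_{L^4}^4$ has the ``wrong'' sign for a dissipation inequality it has the ``right'' sign for the upper bound on $E(t)$ we seek — so it can simply be dropped — while the leftover forcing term $2\lambda g$ is harmless only because the $L^2$ step has already pinned down the exponential rate of $|u(t)|$. A smaller but essential point is the invocation, in the $L^2$ step, of the characterization of the global attractor as the maximal bounded invariant set (equivalently, the union of bounded complete trajectories) to ensure that $c_1>0$.
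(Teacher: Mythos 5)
Your proposal is correct and follows essentially the same route as the paper: the imaginary part of the $L^2$ pairing with two-sided Young/Gronwall estimates, the identity $\frac{d}{dt}\bigl(E+2g\bigr)+2\lambda\bigl(E+2g\bigr)=\lambda|u|_{L^4}^4+2\lambda g$ with $g=\mathrm{Re}\int_{\mathbb T} f\bar u\,dx$, and a one-dimensional Agmon/Gagliardo--Nirenberg plus Young argument to convert the bound on $E(t)$ into the bound on $\|u(t)\|$. The only deviations are minor: the paper lower-bounds the right-hand side of the $\phi$-equation by a constant via $|u|^2\le L^{1/2}|u|_{L^4}^2$ instead of your (equally valid) use of the already-established $L^2$ growth to integrate $e^{2\lambda s}g(s)$ over $(-\infty,0]$, and you make explicit the off-attractor step (existence of some $t_0$ with $|u(t_0)|>|f|/\lambda$) that the paper leaves implicit.
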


\begin{proof}
See the appendix.
\end{proof}

Finally, we point out that our purpose of discussing equation (\ref{DNLS}) is to introduce the backward behavior of solutions for a typical nonlinear dispersive equation with a linear damping: the solution can be extended to all $t\in \mathbb R$ and it grows exponentially backward in time.

\smallskip

\subsection{Complex Ginzburg-Landau equation}
In this subsection we consider the complex Ginzburg-Landau equation:
\begin{align}  \label{GL}
u_t-(a+bi)u_{xx} - \delta u + (\alpha+\beta i) |u|^2 u =0,   \text{\;\;in\;\;} \mathbb T,
\end{align}
where $a\geq 0$, and $b,\delta,\alpha,\beta$ are all real-valued. Roughly speaking, this is a strongly dissipative version of (\ref{dNLS}).

First of all, we notice that, if $a=\alpha=0$, then the equation (\ref{GL}) reduces to a nonlinear Schr\"odinger equation with a linear perturbation, which is globally well-posed for  $t\in \mathbb R$. We have discussed this case in section \ref{DNLS}.

Next, we study a typical parameter regime for equation (\ref{GL}): $a> 0$, $\alpha>0$. In this case, equation (\ref{GL}) is globally well-posed forward in time and has nontrivial asymptotic behavior, which has been extensively studied in the literature (see, e.g. \cite{Aranson-02,Doering-88,Kukavica-99,Moon-82} and references therein).
The backward behavior of the solution to (\ref{GL}) in this parameter regime can be investigated straightforwardly. Indeed, we have the following:
\begin{theorem} \label{thm-GL}
Assume $a>0$ and $\alpha>0$. Let $u(t): [0,\infty) \rightarrow L^2(\mathbb T)$ be a solution of (\ref{GL}) which does not belong to the global attractor.
Then $u(t)$ cannot be extended to a global solution of  (\ref{GL}) for all $t\in \mathbb R$, i.e., any extension of $u(t)$ backward in time blows up in finite time.
\end{theorem}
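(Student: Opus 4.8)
The plan is to mimic the strategy used for the KBS equation in Theorem~\ref{main}(ii), namely: first establish that the complex Ginzburg-Landau equation \eqref{GL} with $a>0$, $\alpha>0$ possesses an absorbing ball in $L^2(\mathbb T)$ (so that a global attractor exists), then show via an \emph{a priori} differential inequality for $|u(t)|^2$ that any trajectory leaving a slightly larger ball must blow up backward in finite time. Since \eqref{GL} has no forcing and no linear space-scale-mixing subtlety of the KdV type, the argument is considerably cleaner than for the KBS equation: no gauge function $\phi_{\alpha,\epsilon}$ is needed because the dissipation $a|u_x|^2$ and the nonlinear damping $\alpha|u|_{L^4}^4$ together already control the energy directly.

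The key step is the basic energy identity. Taking the real part of the $L^2$-inner product of \eqref{GL} with $u$ and using $\operatorname{Re}\big((a+bi)(u_{xx},u)\big) = -a|u_x|^2$ and $\operatorname{Re}\big((\alpha+\beta i)(|u|^2u,u)\big) = \alpha|u|_{L^4}^4$, one obtains
\begin{align*}
\frac{1}{2}\frac{d}{dt}|u|^2 + a|u_x|^2 + \alpha|u|_{L^4}^4 = \delta |u|^2.
\end{align*}
Now I would bound $\delta|u|^2$ from above using the interpolation/Young inequality: on the torus $|u|^2 \le c\,L^{1/2}|u|_{L^4}^2 \le \frac{\alpha}{2}|u|_{L^4}^4 + c(\alpha,L)$, which yields
\begin{align*}
\frac{d}{dt}|u|^2 + \alpha|u|_{L^4}^4 \le C_\ast,
\end{align*}
where $C_\ast = C_\ast(\alpha,\delta,L)$ (with the term $a|u_x|^2\ge 0$ simply dropped). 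Invoking $|u|_{L^4}^4 \ge L^{-1}|u|^4 = L^{-1}(|u|^2)^2$, this becomes a Riccati-type inequality $\frac{d}{dt}y + \frac{\alpha}{L}y^2 \le C_\ast$ for $y(t) = |u(t)|^2$. Forward in time this gives $\limsup_{t\to\infty} y(t) \le \sqrt{C_\ast L/\alpha} =: \rho^2$, proving the absorbing ball (and hence the existence of the global attractor, justifying the phrasing of the theorem). Backward in time, however, the inequality $\frac{d}{dt}y \ge \frac{\alpha}{L}y^2 - C_\ast$ shows that whenever $y(t_0) > \rho^2$ — say $y(t_0) \ge 2\rho^2$, so that $\frac{\alpha}{L}y^2 - C_\ast \ge \frac{\alpha}{2L}y^2$ — the function $y$ satisfies $\frac{d}{dt}y \ge \frac{\alpha}{2L}y^2$ as long as it stays above $2\rho^2$, and a solution of $z' = \frac{\alpha}{2L}z^2$ running backward from $z(t_0)>0$ reaches $+\infty$ in time $\frac{2L}{\alpha z(t_0)} < \infty$. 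Thus any extension of $u$ to negative times starting from a point outside the ball of radius $\sqrt{2}\rho$ must cease to exist after a finite backward time interval, proving that no global solution on $\mathbb R$ exists off the attractor.

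The one technical point requiring care is the comparison argument itself: $y$ is a priori only known to be, say, continuous with a one-sided derivative inequality, so I would phrase the blow-up conclusion through the standard ODE comparison lemma (as in Lemma~\ref{lem-1}), integrating the inequality $\frac{d}{dt}\big(-\frac{1}{y}\big) = \frac{y'}{y^2} \ge \frac{\alpha}{2L}$ over a backward interval on which $y \ge 2\rho^2$ to get $\frac{1}{y(t)} \le \frac{1}{y(t_0)} - \frac{\alpha}{2L}(t_0 - t)$, which is impossible once $t_0 - t \ge \frac{2L}{\alpha\, y(t_0)}$. The main obstacle, and the reason the proof is only a sketch of the KBS template rather than identical to it, is simply verifying that the trajectory genuinely does leave the absorbing ball backward in time — but this is immediate: if a global solution on $\mathbb R$ stayed in a fixed ball for all $t\le 0$, it would by the standard theory of dissipative systems be contained in (a bounded invariant set inside) the global attractor, contradicting the hypothesis that $u$ is off the attractor. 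Hence we may always pick $t_0$ with $|u(t_0)|^2 \ge 2\rho^2$, and the finite-time backward blow-up follows.
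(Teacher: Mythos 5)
Your proposal is correct and follows essentially the same route as the paper: the real-part $L^2$ energy identity, dropping the dissipation $a|u_x|^2\ge 0$, Cauchy--Schwarz in the form $|u|_{L^4}^4\ge |u|^4/L$, and a backward Riccati comparison giving finite-time blow-up once $|u|^2$ exceeds a threshold (the paper simply keeps the linear term and uses the threshold $\delta L/\alpha$ rather than your Young-inequality constant $C_\ast$, while your explicit remark that a globally bounded backward extension would lie on the attractor is the same fact the paper uses implicitly). One small slip: the inequalities $\frac{d}{dt}y \ge \frac{\alpha}{L}y^2 - C_\ast$ and $\frac{d}{dt}\bigl(-\frac{1}{y}\bigr)\ge\frac{\alpha}{2L}$ are valid only in the reversed time variable $s=-t$ (in the original time one has $y'\le -\frac{\alpha}{2L}y^2$ above the threshold, i.e.\ $\frac{d}{dt}\bigl(\frac{1}{y}\bigr)\ge\frac{\alpha}{2L}$), but your final bound $\frac{1}{y(t)}\le\frac{1}{y(t_0)}-\frac{\alpha}{2L}(t_0-t)$ is exactly what this gives, so the argument stands.
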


\begin{proof}
We multiply (\ref{GL}) with $\bar u$ and integrate over $\mathbb T$, and then take the real part. It follows that
\begin{align}    \label{en-GL}
\frac{1}{2}\frac{d}{dt} |u|^2 + a |u_x|^2  - \delta |u|^2 +\alpha |u|_{L^4}^4=0.
\end{align}
Since $a\geq 0$, we obtain
\begin{align*}
\frac{1}{2}\frac{d}{dt} |u|^2 \leq \delta |u|^2 -\alpha |u|_{L^4}^4 \leq  \delta |u|^2 - \frac{\alpha}{L}|u|^4,
\end{align*}
by Cauchy-Schwarz inequality. By setting $s=-t$ and $y=|u|^2$, one has
\begin{align*}
\frac{dy}{ds}  \geq - 2 \delta y + \frac{2 \alpha}{L}  y^2.
\end{align*}
Hence, if $y(s_0) > \frac{\delta L}{\alpha}$, then $y(s)$ blows up at some finite time $s=s_1>s_0$.
\end{proof}

Theorem \ref{thm-GL} states that any solution $u$ to (\ref{GL}) off the global attractor blows up backward in finite time with respect to the $L^2(\mathbb T)$ norm, provided $a>0$, $\alpha>0$. We stress here, since we drop the viscosity in the above estimate, the backward blow-up in the case is solely due to the nonlinearity, and it does not result from the presence of the viscosity. This is a totally different mechanism, contrast to the backward blow-up phenomenon of the KBS equation (\ref{visKdV}), which is caused by two forces: the cascade of energy to small scales by the nonlinearity $uu_x$, as well as the amplification of higher modes by the viscosity.
In a word, our purpose for discussing equation (\ref{GL}) is to point out a backward blow-up mechanism for certain dissipative PDEs, which is completely due to the effect of the nonlinearity.

Another typical parameter regime for (\ref{GL}) is that $a>0$, $\alpha<0$. Under this scenario, equation (\ref{GL}) is locally well-posed forward in time, and by referring to the energy identity (\ref{en-GL}), we see that the nonlinear term acts as a source forward in time, which can lead the solution to blow up in finite time \cite{Ball-blowup} (see also \cite{Larios-Titi-10,Larios-Titi-14} for other results concerning finite time blow-up of similar equations). On the other hand, for the CGLE defined in the whole space $\mathbb R^{d}$, the blow-up phenomenon has been carefully discussed in  \cite{Cazenave-13,Masmoudi-08,Rot-08}, etc. However, since $\alpha<0$, we see that the nonlinearity tends to damp the energy as the time goes backwards, thus the life span of solutions backward in time might be a subtle problem, which deserves a future study.

\smallskip

\subsection{Hyperviscous Navier-Stokes equations} \label{hyper}
We have mentioned in the introduction that, in studying the backward behavior of the 2D periodic NSE \cite{backward-NSE}, a pair of orthogonality relations of the form $((u\cdot \nabla)v,v)=0$ and $((u\cdot \nabla)u, \Delta u)=0$ is critical. Also, similar orthogonality relations are valid for the 2D periodic Navier-Stokes-$\alpha$ model \cite{Vukadinovic-02}. However, consider the 2D periodic hyperviscous NSE
\begin{align} \label{NSE-h}
\begin{cases}
u_t+\nu \Delta ^2 u + (u \cdot \nabla) u+\nabla p=f,   \;\; x\in \mathbb T^2,\\
\text{div\;} u=0, \\
u(0)=u_0,
\end{cases}
\end{align}
with mean value zero assumption on $u_0$, $f$ and $u$.
Notice that the scalar product of the nonlinear and linear terms of (\ref{NSE-h}), i.e. $((u\cdot \nabla)u,\Delta^2 u)$, does not necessarily vanish, so the technique presented in \cite{backward-NSE} seems not directly applicable to (\ref{NSE-h}). Thus, the backward behavior of equation (\ref{NSE-h}) is an interesting problem to investigate.

One may first consider a simpler case $f=0$. In the absence of external forces, the energy satisfies $\frac{1}{2}\frac{d}{dt}|u|^2+\nu |\Delta u|^2=0$, then by Poincar\'e inequality, we see that the energy $|u(t)|$ decays to zero at least exponentially in time, as $t\rightarrow \infty$, and the system has a trivial attractor. In fact, one can show a stronger result: the decay forward in time is exactly of exponential type, i.e., $|u(t)|$ cannot decrease faster than exponentially as $t\rightarrow \infty.$
Concerning the 2D or 3D NSE on bounded or periodic domain without forcing, the same exponential decay result is also valid, which was proved by Foias and Saut in \cite{Foias-Saut}, and more precisely they showed that the ratio of the enstrophy over the energy converges to an eigenvalue of the Stokes operator as $t\rightarrow \infty$. A similar result also holds for the solution of (\ref{NSE-h}) with $f=0$, i.e., the ratio $|\Delta u(t)|^2/|u(t)|^2$ is convergent to an eigenvalue of the bi-Laplacian as $t\rightarrow \infty$.

In order to state the following result, we define the space
$\mathcal H:=$closure of $\mathcal V$ in $L^2_{per}(\mathbb T^2)$ where
$\mathcal V=\{\varphi=\text{trigonometric polynomials with values in} \;\mathbb R^2: \text{\;div\;} \varphi=0, \int_{\mathbb T^2} \varphi dx =0  \}$. The well-posedness of (\ref{NSE-h}) for $t\in [0,\infty)$ with $u_0\in \mathcal H$ is classical.

\begin{theorem} \label{thm-NSE}
Let $f=0$ and $u_0\in \mathcal H$, then the solution $u$ of (\ref{NSE-h}) satisfies
\begin{align}  \label{swi}
e^{-b(t-t_0)}  |u(t_0)| \leq   |u(t)|   \leq  e^{-\nu \lambda_1^2 (t-t_0)}|u(t_0)|, \text{\;\;for\;\;} t\geq t_0>0,
\end{align}
where $b=\nu  \frac{|Au(t_0)|^2}{|u(t_0)|^2}    \exp{\left\{c \nu^{-\frac{3}{2}} \lambda_1^{-2}   |u(t_0)|^2 \right \} }         $
and $\lambda_1=(2\pi/L)^2$.
\end{theorem}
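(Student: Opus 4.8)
The plan is to derive the two-sided bound \eqref{swi} from the basic energy identity $\frac{1}{2}\frac{d}{dt}|u|^2 = -\nu|Au|^2$, which holds because $f=0$ and the nonlinear term is orthogonal to $u$ in $L^2$ for divergence-free periodic fields, i.e. $((u\cdot\nabla)u,u)=0$. The upper bound in \eqref{swi} is immediate: by the Poincar\'e inequality $|Au|^2 \geq \lambda_1^2 |u|^2$ (note $A = -\Delta$ has first eigenvalue $\lambda_1 = (2\pi/L)^2$, so $A^2 = \Delta^2$ has first eigenvalue $\lambda_1^2$), whence $\frac{d}{dt}|u|^2 \leq -2\nu\lambda_1^2|u|^2$ and Gronwall gives $|u(t)| \leq e^{-\nu\lambda_1^2(t-t_0)}|u(t_0)|$ for $t\geq t_0$.

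The lower bound is the substantive part and follows the Foias--Saut strategy of controlling the logarithmic derivative of the energy via the ratio $\Lambda(t) := |Au(t)|^2/|u(t)|^2$. First I would observe that $\frac{d}{dt}\log|u(t)|^2 = -2\nu\Lambda(t)$, so it suffices to show $\Lambda(t) \leq \Lambda(t_0)\exp\{c\nu^{-3/2}\lambda_1^{-2}|u(t_0)|^2\} =: b/\nu$ for all $t\geq t_0$; then integrating yields $|u(t)|^2 \geq e^{-2b(t-t_0)}|u(t_0)|^2$, which is \eqref{swi}. To bound $\Lambda(t)$ I would compute its time derivative. Taking the scalar product of \eqref{NSE-h} (with $f=0$) with $A^2 u$ gives an enstrophy-type identity $\frac{1}{2}\frac{d}{dt}|Au|^2 = -\nu|A^{3/2}u|^2 - ((u\cdot\nabla)u, A^2u)$; combining this with the energy identity produces
\begin{align*}
\frac{1}{2}\frac{d}{dt}\Lambda(t) = \frac{1}{|u|^2}\Big(-\nu|A^{3/2}u|^2 + \nu\Lambda|Au|^2 - ((u\cdot\nabla)u, A^2u)\Big).
\end{align*}
By the Cauchy--Schwarz inequality $|Au|^2 \leq |u|\,|A^{3/2}u|$ (interpolation via Fourier coefficients), the first two terms combine to something $\leq 0$, so $\frac{d}{dt}\Lambda \leq \frac{2}{|u|^2}|((u\cdot\nabla)u, A^2u)|$. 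It remains to estimate the trilinear term: using integration by parts, H\"older, Agmon/Ladyzhenskaya-type inequalities in 2D and Young's inequality, one bounds $|((u\cdot\nabla)u, A^2u)| \leq \frac{\nu}{2}|A^{3/2}u|^2 + c\nu^{-3}|u|^{?}|Au|^{?}$ with exponents arranged so that, after dividing by $|u|^2$, the right side is controlled by $c\nu^{-3}|u|^{a}\Lambda^{b}$ for suitable $a,b$. Absorbing the gradient term into the dissipation and using that $|u(t)| \leq |u(t_0)|$ is nonincreasing, this gives a differential inequality $\frac{d}{dt}\Lambda \leq c\nu^{-3}|u(t_0)|^{2}\Lambda$ (possibly after further absorption), and Gronwall over $[t_0,t]$ yields the stated exponential factor $\exp\{c\nu^{-3/2}\lambda_1^{-2}|u(t_0)|^2\}$ on $\Lambda(t_0)$ — the power $\nu^{-3/2}$ and the $\lambda_1^{-2}$ emerge from tracking the dissipation-absorption constants together with one more Poincar\'e inequality to homogenize units.

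The main obstacle is the trilinear estimate for the hyperviscous operator: unlike in \cite{backward-NSE}, the term $((u\cdot\nabla)u, A^2 u)$ does not vanish, so the whole argument rests on showing it is subordinate to the hyperdissipation $\nu|A^{3/2}u|^2$ with a constant that depends only on $\nu$, $\lambda_1$, and the (nonincreasing) energy $|u(t_0)|$. In two dimensions this is favorable — the hyperviscous smoothing is strong enough that no smallness is needed — but getting the exponents to land exactly on the claimed form of $b$ requires care with the interpolation inequalities and Young's inequality splittings; I expect this bookkeeping, rather than any conceptual difficulty, to be where the work lies.
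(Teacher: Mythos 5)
Your overall strategy is the same as the paper's (the Foias--Saut device of controlling the ratio $\Lambda(t)=|Au(t)|^2/|u(t)|^2$ and integrating $\frac{d}{dt}\log|u|^2=-2\nu\Lambda$), and the upper bound in \eqref{swi} is handled correctly. But the execution of the lower bound has a genuine gap. You propose to arrive at a differential inequality of the form $\frac{d}{dt}\Lambda \leq c\nu^{-3}|u(t_0)|^2\,\Lambda$, i.e. with a \emph{constant} Gronwall coefficient (using only that $|u(t)|\le |u(t_0)|$), and then claim Gronwall ``yields the stated exponential factor.'' It does not: Gronwall with a constant coefficient $K$ gives $\Lambda(t)\le \Lambda(t_0)e^{K(t-t_0)}$, which grows without bound in $t$; feeding this into $\frac{d}{dt}\log|u|^2=-2\nu\Lambda$ produces a doubly exponential lower bound, not the bound \eqref{swi} with a rate $b$ independent of $t$. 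The essential missing ingredient is that the Gronwall coefficient must be \emph{time-integrable on $[t_0,\infty)$}. The paper's proof gets $\frac{d}{dt}q \le c\,\nu^{-1}\lambda_1^{-1/2}\,|u|\,\|u\|\,q$ (coefficient depending on the solution, not frozen at $t_0$) and then shows
\begin{align*}
\int_{t_0}^{\infty} |u|\,\|u\|\,ds \;\le\; c\,\nu^{-1/2}\lambda_1^{-3/2}\,|u(t_0)|^2,
\end{align*}
by combining the already-proved exponential decay of $|u|$ with the dissipation budget $\int_{t_0}^{\infty}|Au|^2\,ds \le \frac{1}{2\nu}|u(t_0)|^2$ coming from the energy identity. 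That finiteness is exactly what makes the bound on $\Lambda(t)$ uniform in $t$ and hence $b$ a genuine constant; your scheme, as written, cannot produce it.

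There are also two smaller but real errors in your ``enstrophy-type identity.'' Pairing \eqref{NSE-h} (with $f=0$) against $A^2u$ gives $\frac12\frac{d}{dt}|Au|^2+\nu|A^2u|^2+(B(u,u),A^2u)=0$: the dissipation is $\nu|A^2u|^2$, not $\nu|A^{3/2}u|^2$ (the latter would come from pairing with $Au$, but then the time-derivative term is $\frac12\frac{d}{dt}\|u\|^2$, not $\frac12\frac{d}{dt}|Au|^2$). Correspondingly, the interpolation you invoke, $|Au|^2\le|u|\,|A^{3/2}u|$, is false in general (test a single high Fourier mode); the correct inequality matching the correct dissipation is $|Au|^2\le|u|\,|A^2u|$, equivalently $|A^2u|^2-\Lambda|Au|^2=|A^2u-\Lambda u|^2\ge 0$, which is the sign structure the paper exploits by splitting $\nu A^2u=\nu(A^2u-qu)+\nu qu$ and using $(u,A^2u-qu)=0$. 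With these corrections your negativity step goes through, but the quantitative conclusion still requires the time-integrability argument described above rather than a constant-coefficient Gronwall.
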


\begin{proof}
The proof follows the approach in \cite{Foias-Saut}. For the sake of completion, we provide the proof in the appendix.
\end{proof}

\begin{remark}
Assume that there exists a global solution $u(t)$, $t\in \mathbb R$, of (\ref{NSE-h}) with $f=0$. By switch $t$ and $t_0$ in (\ref{swi}), we obtain
\begin{align*}
|u(t)|\geq e^{\nu \lambda_1^2 (t_0-t)}|u(t_0)|, \text{\;\;for\;\;}  -\infty < t \leq t_0 <\infty.
\end{align*}
This shows that the energy $|u(t)|$ of a nonzero global solution $u$ grows \emph{at least} exponentially fast as the time $t$ goes backwards. Thus, there are three possibilities of the backward-in-time behaviors of the solution $u$, that is, $|u(t)|$ increases exactly of exponential type as $t\rightarrow -\infty$, or faster than exponential, or blow up backward in finite time. Here, concerning the 2D periodic hyperviscous NSE (\ref{NSE-h}), we state a similar conjecture to the one of Bardos-Tartar \cite{Bardos-Tartar}: $S(t) \mathcal H$ is dense in $\mathcal H$ for every $t>0$, where $S(t)$ is the solution semigroup of (\ref{NSE-h}). In fact, we can propose a slightly stronger conjecture for (\ref{NSE-h}): the set of all $u_0\in \mathcal H$ for which $S(t)u_0$ is a global solution for $t\in \mathbb R$ is dense in $\mathcal H$.

Finally, we point out that, for the 2D periodic hypervicous NSE (\ref{NSE-h}) without forcing, i.e., $f=0$, there exists a class of initial data $u_0$ for which the solution $S(t)u_0$ of (\ref{NSE-h}) is global, for all $t\in \mathbb R$, such that the energy $|S(t)u_0|$ grows exponentially fast as $t\rightarrow -\infty$. In order to construct these explicit solutions we write (\ref{NSE-h}) with $f=0$ in the equivalent vorticity-stream formulation (see, e.g. \cite{Majda-Bertozzi})
\begin{align} \label{v-s}
\omega_t+\nu \Delta^2 \omega+J(\psi, \Delta \psi)=0, \;\; x\in \mathbb T^2,
\end{align}
where the vorticity $\omega:=\text{curl}\, u$, $\psi$ is the stream function satisfying
$u=\nabla^\perp \psi:=(-\psi_{x_2},\psi_{x_1})^{tr}$, and the Jacobian
$J(\psi, \Delta \psi):= \begin{vmatrix} \psi_{x_1} & \psi_{x_2} \\ \Delta \psi_{x_1}   & \Delta \psi_{x_2} \end{vmatrix} $.
Let us define, for any fixed $\beta \in \mathbb N$,
$$\psi_0(x)=\sum_{\mathbf k\in \mathbb Z^2, \,|\mathbf k|^2=\beta}  [a_{\mathbf k} \cos(\frac{2\pi \mathbf k}{L} \cdot x )+ b_{\mathbf k} \sin(\frac{2\pi \mathbf k}{L} \cdot x )],  \;\;x\in \mathbb T^2, $$
where $a_{\mathbf k}$ and $b_{\mathbf k}$ are arbitrary real numbers. Notice that $\psi_0$ is a steady state of the 2D Euler equation $\omega_t+J(\psi, \Delta \psi)=0$ on $\mathbb T^2$, due to the fact $J(\psi_0,\Delta \psi_0)=0$ (see, e.g. \cite{Majda-Bertozzi}). Inspired by these steady state solutions of the Euler equation, we see that $$\psi(x,t)=e^{-16 \pi^4 \beta^2 \nu t/L^4} \psi_0(x)$$ is an explicit solution of (\ref{v-s}), since for this particular stream function, one has $J(\psi, \Delta \psi)=0$, and the vorticity $$\omega(x,t)=\Delta \psi(x,t)=\frac{4\pi \beta}{L^2} e^{-16 \pi^4 \beta^2 \nu t /L^4}\psi_0(x).$$
Also, we can calculate the velocity
\begin{align*}
u_0=\nabla^{\perp} \psi_0=
\begin{pmatrix}
\sum_{\mathbf k=(k_1,k_2)\in \mathbb Z^2, \,|\mathbf k|^2=\beta}  [a_{\mathbf k} k_2 \sin(\frac{2\pi \mathbf k}{L} \cdot x )- b_{\mathbf k} k_2 \cos(\frac{2\pi \mathbf k}{L} \cdot x )] \\
\sum_{\mathbf k=(k_1,k_2)\in \mathbb Z^2, \,|\mathbf k|^2=\beta}  [-a_{\mathbf k} k_1 \sin(\frac{2\pi \mathbf k}{L} \cdot x )+ b_{\mathbf k} k_1 \cos(\frac{2\pi \mathbf k}{L} \cdot x )]
\end{pmatrix}.
\end{align*}
Since (\ref{v-s}) is equivalent to (\ref{NSE-h}) with $f=0$, we have that $$u(x,t)=\nabla^{\perp}\psi=e^{-16 \pi^4 \beta^2 \nu t/L^4} u_0$$
is an explicit global solution of (\ref{NSE-h}) with $f=0$, for all $t\in \mathbb R$. Obviously, the $L^2$-norm $|u(t)|=e^{-16 \pi^4 \beta^2 \nu t/L^4} |u_0|$ grows exponentially fast backward in time. In other words, there exists a family of initial data $u_0$, constructed above, for which the solution $S(t)u_0$ of (\ref{NSE-h}) is global, for all $t\in \mathbb R$, with the $L^2$-norm growing exponentially fast backward in time. However, it is an open problem whether that the set of initial data, for which the solution of (\ref{NSE-h}) can be extended to all $t\in \mathbb R$, is dense in $\mathcal H$.

\end{remark}

\bigskip

\section{Discussion}   \label{spectra}
In this section we would like to provide some discussion for the purpose of understanding the different backward behaviors of the 2D periodic NSE and the KBS equation (\ref{visKdV}). In section \ref{soliton}, we will consider a special case of the KBS equation: the viscous KdV equation, and demonstrate that a cnoidal-like wave solution (periodic soliton) blows up backward in time. In section \ref{en-spec} we employ the energy spectra and argue, using Kolmogorov turbulence theory, in an attempt to explain why the solutions of the 2D periodic NSE can be extended to a global solution, for all $t\in \mathbb R$, for a rich set of initial data, but any solution outside the global attractor of the KBS equation (\ref{visKdV}), with $\nu>0$, must blow up backward in finite time. Our discussion is intended to shed more light on the nature of this phenomenon using physical non-rigorous arguments.

\subsection{Physical interpretation of backward behaviors of perturbed KdV} \label{soliton}
We aim to discuss the backward blow-up of the KBS equation (\ref{visKdV}) using perturbation arguments on explicit solutions. For the sake of simplicity, we set $\beta=0$, $f=0$ and $\nu>0$ in (\ref{visKdV}), the equation becomes a viscous perturbation of the KdV equation
\begin{align}   \label{vkdv}
u_t-\nu u_{xx} + u u_x + \gamma u_{xxx}=0,  \text{\;\;in\;\;} \mathbb T=[-L/2,L/2],
\end{align}
with the spatial mean value zero assumption on $u$. We have shown that all of the nonzero solutions of (\ref{vkdv}) blow up backward in finite time. It is well-known that the existence of the soliton solutions for KdV represents a balance between the dispersion and nonlinearity. Adding a viscous term to the KdV equation affects such balance, the viscosity dissipates the kinetic energy and tends to diminish the soliton forward in time. However, backward in time, the viscosity acts as a source that may intensify the soliton, and cause the deformation of its shape, leading to higher amplitude and shrinking width. Thus the blow-up of the soliton backward in time may result from the amplitude of the perturbed soliton approaching infinity.

To demonstrate this argument more clearly, we shall look at an explicit periodic soliton solution of the KdV equation and employ a standard perturbation argument. Let us consider the classical KdV equation
\begin{align}  \label{KdV-T}
u_t+uu_x+u_{xxx}=0, \text{\;\;in\;\;} \mathbb T.
\end{align}
It was discovered by Gardner, Greene, Kruskal, and Miura in their seminal work \cite{Gardner-67,Gardner-74} that, if $u$ is a solution of the KdV (\ref{KdV-T}), then the eigenvalues of the Schr\"odinger operator $\partial_{xx}+\frac{1}{6}u$ are independent of time. Specifically, suppose that there exist $\lambda(t)$ and a function $\psi(x,t)\in H^2(\mathbb T)$, with $\int_{\mathbb T} \psi^2(x) dx=1$, satisfying
\begin{align}  \label{Sturm}
(\partial_{xx}+\frac{1}{6}u)\psi=\lambda \psi,    \text{\;\;in\;\;} \mathbb T,
\end{align}
then $\lambda(t)$ is a constant in time. This can be seen readily by a formal argument \cite{Gardner-67}. Indeed, a straightforward manipulation of (\ref{Sturm}) implies
\begin{align}   \label{soli-1}
\frac{\partial}{\partial x} (\psi_x R - \psi R_x )=-\psi^2 (\lambda_t - \frac{1}{6}(  u_t +  u u_x + u_{xxx} )  ),
\end{align}
where $R(x,t):=\psi_t - \frac{1}{6} u_x \psi + (\frac{1}{3}u+4\lambda) \psi_x$. Since $u$ is a solution of (\ref{KdV-T}), then (\ref{soli-1}) is reduced to
\begin{align}   \label{soli-2}
\frac{\partial}{\partial x} (\psi_x R - \psi R_x )=-\psi^2 \lambda_t.
\end{align}
Integrating (\ref{soli-2}) in $\mathbb T$, by using the periodicity of $\psi$ and $u$, as well as the square integrability of $\psi$, we infer that $\lambda_t=0$, i.e., $\lambda(t)$ is \emph{invariant} in time.

By employing the technique mentioned above, a perturbation of the KdV equation can be analyzed. The method is standard and can be found in \cite{Ablowitz, Drazin, Kaup-Newell}. Indeed, let us consider a perturbed KdV:
\begin{align}   \label{vari-KdV}
\tilde u_t+\tilde u \tilde u_x + \tilde u_{xxx}= \epsilon q,   \text{\;\;in\;\;} \mathbb T,
\end{align}
where $\epsilon>0$ is small and $q(x,t)$ may depend on $\tilde u$. The goal is to find the evolution of the eigenvalues of the operator $\partial_{xx}+\frac{1}{6} \tilde u$
provided $\tilde u$ is a solution of (\ref{vari-KdV}). To this end, we consider the equation
\begin{align}  \label{sturm'}
(\partial_{xx}+\frac{1}{6}\tilde u)\tilde \psi=\tilde \lambda \tilde \psi, \text{\;\;in\;\;} \mathbb T, \text{\;with\;} \int_{\mathbb T} \tilde \psi^2(x) dx=1.
\end{align}
Exactly the same as (\ref{soli-1}), we have
\begin{align}   \label{soli-1'}
\frac{\partial}{\partial x} (\tilde \psi_x \tilde R - \tilde \psi \tilde R_x )=-\tilde \psi^2 (\tilde \lambda_t - \frac{1}{6}(  \tilde u_t +  \tilde u \tilde u_x + \tilde u_{xxx} )  ),
\end{align}
where we define $\tilde R(x,t)=\tilde \psi_t - \frac{1}{6} \tilde u_x \tilde \psi + (\frac{1}{3}\tilde u+4\tilde \lambda) \tilde \psi_x$.
By substituting (\ref{vari-KdV}) into (\ref{soli-1'}), it follows that
\begin{align}    \label{soli-3}
\frac{\partial}{\partial x} (\tilde \psi_x \tilde R - \tilde \psi \tilde R_x )=\tilde \psi^2 (\tilde \lambda_t - \frac{1}{6}\epsilon q  ).
\end{align}
Unlike the situation of the classical KdV equation (\ref{KdV-T}), here an eigenvalue $\tilde \lambda(t)$ is no longer invariant in time. Indeed, by integrating (\ref{soli-3}) over $\mathbb T$ and using the periodicity of $\tilde \psi$ and $\tilde u$, it follows that
\begin{align}   \label{soli-4'}
\tilde \lambda_t \int_{\mathbb T} \tilde \psi^2 dx  = \frac{1}{6} \epsilon \int_{\mathbb T} \tilde \psi ^2 q dx.
\end{align}
Since $\int_{\mathbb T} \tilde \psi^2 dx = 1$, then
\begin{align}   \label{soli-4}
\tilde \lambda_t   = \frac{1}{6} \epsilon \int_{\mathbb T} \tilde \psi ^2 q dx.
\end{align}
Note that (\ref{soli-4}) shows that an eigenvalue $\tilde \lambda(t)$ may evolve slowly within a short time provided $\epsilon$ is small.

Next we look at a typical explicit periodic solution of the KdV equation defined in the torus $\mathbb T=[-\frac{L}{2},\frac{L}{2}]$. Take the initial data to be $u_0=12m_0 l_0^2 \text{\,cn}^2 (l_0 x,m_0)$,
where $m_0\in (0,1)$ is the elliptic parameter for the Jacobi elliptic function $\text{cn}(x,m_0)$. Here, we require $m_0$ and $l_0$ to satisfy the relation
\begin{align} \label{ml}
l_0=\frac{1}{L}\int_0^{2\pi} \frac{1}{\sqrt{1-m_0 \sin^2\theta}} d\theta
\end{align}
so that the period of $u_0(x)$ is $L$, due to the definition $\text{cn}(x,m_0)=\cos(\phi)$ where $x=\int_0^{\phi} \frac{1}{\sqrt{1-m_0\sin^2 \theta}} d\theta$.
With this initial data $u_0$, the unique solution of the KdV equation (\ref{KdV-T}) is
\begin{align} \label{cnoidal}
u(x,t)=12 m_0 l_0^2 \text{\,cn}^2 (l_0(x-c_0 t),m_0),
\end{align}
where $c_0=4(2m_0-1)l_0^2$ (see, e.g., \cite{Drazin,Liu}). This is the so-called ``cnoidal" wave solution (periodic soliton) derived by Korteweg and de Vries in their 1895 paper \cite{KdV} in which they proposed the KdV equation.
Concerning the spectral problem (\ref{Sturm}) with the cnoidal wave solution $u$ given in (\ref{cnoidal}), we find an eigenvalue
\begin{align}  \label{ei-v}
\lambda=(2m_0-1)l_0^2,
\end{align}
with the corresponding eigenfunction
\begin{align}    \label{ei-f}
\psi(x,t)=\sqrt{\frac{l_0}{C_{m_0}}} \text{\,cn}(l_0(x-c_0 t),m_0),
\end{align}
where $C_{m_0}= \int_{-K_{m_0}/2}^{K_{m_0}/2} \text{\,cn}^2(x,m_0) dx $, where $K_{m_0}=\int_0^{2\pi} \frac{1}{\sqrt{1-m_0 \sin^2\theta}} d\theta$ is the period of the elliptic function $\text{cn}(x,m_0)$. With this definition of $\psi$, one has $\int_{\mathbb T} \psi^2(x) dx=1$.

For the same initial data $u_0=12 m_0 l_0^2 \text{\,cn}^2 (l_0 x,m_0)$, we now investigate the solution $\tilde u$ of the perturbed KdV equation (\ref{vari-KdV}) provided $\tilde u(x,0)=u_0$. Since $\epsilon>0$ is very small, we expect the solution $\tilde u$ of (\ref{vari-KdV}) to be a small perturbation of the KdV solution (\ref{cnoidal}), at least for a short time, in the sense that,
\begin{align}   \label{cnoidal'}
\tilde u(x,t) \approx 12 m l^2 \text{\,cn}^2(l(x-ct),m),
\end{align}
where $l(t)=\frac{1}{L}\int_0^{2\pi} \frac{1}{\sqrt{1-m(t) \sin^2\theta}} d\theta$ and $c(t)=4(2m(t)-1)l^2(t)$, with $0<m(t)<1$,
such that $l(0)=l_0$ and $m(0)=m_0$. As a result, for the spectral problem (\ref{sturm'}), it is expected that there exists an eigenvalue $\tilde \lambda$ with the corresponding eigenfunction $\tilde \psi$, which are small perturbation of $\lambda$ and $\psi$, defined in (\ref{ei-v}) and (\ref{ei-f}), respectively, i.e.,
\begin{align}  \label{ei-v'}
\tilde \lambda(t)\approx(2m(t)-1)l^2(t),
\end{align}
with the corresponding eigenfunction
\begin{align}    \label{ei-f'}
\tilde \psi(x,t) \approx   \sqrt{\frac{l}{C_{m}}} \text{\,cn}(l(x-ct),m),
\end{align}
where $C_{m}(t)= \int_{-K_{m}/2}^{K_{m}/2} \text{\,cn}^2(x,m(t)) dx $, where $K_{m}(t)=\int_0^{2\pi} \frac{1}{\sqrt{1-m(t) \sin^2\theta}} d\theta$ is the period of the elliptic function $\text{cn}(x,m(t))$.

In order to find out how the shape of the cnoidal-like wave solution $\tilde u$ changes in time for the perturbed KdV equation (\ref{vari-KdV}) with small perturbation $\epsilon q$, we study some typical perturbations.

\subsubsection{The damped KdV equation} First set $q=-\tilde u$ in (\ref{vari-KdV}), i.e., we consider the KdV equation with a small linear damping
\begin{align} \label{soli-8}
\tilde u_t+\tilde u \tilde u_x + \tilde u_{xxx}= -\epsilon \tilde u, \text{\;\;in\;\;} \mathbb T.
\end{align}
Equation (\ref{soli-8}) is a special case of a more general equation $u_t+uu_x + u_{xxx}= -\Gamma(t) u,$ which models a wave moving over an uneven bottom \cite{Johnson-73,Kakutani,Kaup-Newell}.
By (\ref{cnoidal'}), $q=-\tilde u \approx -12 m l^2 \text{\,cn}^2(l(x-ct),m)$, then we substitute this along with (\ref{ei-v'}) and (\ref{ei-f'}) into (\ref{soli-4}) to obtain
\begin{align}  \label{solii-1}
\partial_t[(2m-1)l^2]  \approx  \frac{-2 \epsilon ml^2}{C_{m}} \int_{-K_m/2}^{K_m/2}  \text{\,cn}^4(x,m)  dx.
\end{align}
In order to see the evolution of $l(t)$ more clearly, we consider the situation in which the elliptic parameter $m$ is very close to 1, so that $m$ is almost invariant in short time compared to the change of $l$, and then the Jacobi elliptic function $\text{cn}(x,m)$ can be approximated by the hyperbolic function $\text{sech}(x)$. Under such scenario, we obtain from (\ref{solii-1}) that
\begin{align}   \label{soli-7}
\partial_t l \approx - \epsilon\tilde C  l, \text{\;\;with\;\;} l(0)=l_0,
\end{align}
where $l_0$ is given in (\ref{ml}), and $\tilde C=\left(\int_{\mathbb R}\text{sech}^4(x)dx\right) \left(\int_{\mathbb R}\text{sech}^2(x)dx\right)^{-1}=\frac{2}{3}$.
Notice that (\ref{soli-7}) implies $l(t)\approx e^{-\epsilon \tilde C t} l_0$, i.e., forward in time, $l(t)$ decreases to zero exponentially fast. On the other hand, backward in time, $l(t)$ increases with an approximately exponential rate, i.e., the amplitude $12m l^2$ of the cnoidal wave (\ref{cnoidal'}) grows exponentially with an exponentially fast shrinking width $\sim 1/l$. Moreover, the propagation speed $c(t) = 4(2m(t)-1)l^2(t)$ of the cnoidal wave also increases exponentially backward in time.
Furthermore, by (\ref{cnoidal'}) it follows that the energy $|\tilde u(t)|^2 \sim l^3(t)$, which increases exponentially fast backward in time. Notice that this finding is consistent with the $L^2$ energy estimate of (\ref{soli-8}), i.e., $|\tilde u(t)|^2=e^{-2\epsilon t} |u_0|^2$. In addition, from (\ref{cnoidal'}) we see that $|\tilde u_x(t)|^2 \sim l^5(t)$ also tends to grow exponentially backward in time.

\subsubsection{The viscous KdV equation}   \label{sub-vkdv}
Next we set $q=\tilde u_{xx}$ in (\ref{vari-KdV}), i.e., we consider the viscous KdV equation
\begin{align} \label{soli-9}
\tilde u_t+\tilde u \tilde u_x + \tilde u_{xxx}= \epsilon \tilde u_{xx}, \text{\;\;in\;\;} \mathbb T.
\end{align}
By using (\ref{cnoidal'}), we calculate
$$q=\tilde u_{xx}\approx -24m l^4 [3m \text{\,cn}^4(l(x-ct),m)+(2-4m)\text{\,cn}^2(l(x-ct),m) +m-1  ].$$
Substituting this formula along with (\ref{ei-v'}) and (\ref{ei-f'}) into (\ref{soli-4}) gives
\begin{align} \label{solii-2}
\partial_t[(2m-1)l^2] \approx - \frac{4\epsilon m l^4}{C_m} \int_{-K_m/2}^{K_m/2}
&\text{\,cn}^2(x,m) [3 m \text{\,cn}^4(x,m)    \notag\\
&+(2-4m) \text{\,cn}^2(x,m)+m-1] dx.
\end{align}
Analogously as above, we consider the case that the elliptic parameter $m$ is very close to 1 so that $m$ is almost invariant in short time compared to the change of $l$, and then $\text{cn}(x,m)$ can be approximated by $\text{sech}(x)$. Thus,
(\ref{solii-2}) implies
\begin{align} \label{solii-3}
\partial_t l \approx -\epsilon \tilde C_1  l^3
\end{align}
where $\tilde C_1=2\left(\int_{\mathbb R} \text{sech}^4(x)[3 \,\text{sech}^2(x)-2] dx\right)
\left(\int_{\mathbb R} \text{\,\text{sech}}^2(x) dx \right)^{-1}=\frac{8}{15}$. It follows that $l(t)$ decreases to zero forward in time. In addition, we see from (\ref{solii-3}) that $l(t)$ tends to approach infinity backward in finite time, i.e., the amplitude $12ml^2$ of the cnoidal wave (\ref{cnoidal'}) grows very fast with a rapidly shrinking width $\sim 1/l$ and a fast accelerating wave speed $c=4(2m-1)l^2$, which may lead to a backward blow-up. Also, we obtain from (\ref{cnoidal'}) that the energy $|\tilde u(t)|^2 \sim l^3(t)$, tending to blow up backward in time. This finding indicates the mechanism of singularity formulation, and it is consistent with the rigorous mathematical result in Theorem 1(ii) that the energy of the viscous KdV (\ref{soli-9}) blows up backward in time.

\subsubsection{The viscous Benjamin-Bona-Mahony type of equation}
Finally, in order to test the robustness of the asymptotic argument presented in this section, we would like to study another perturbation of the KdV equation. In particular, we set $q=\tilde u_{xx}+\tilde u_{xxt}$ in (\ref{vari-KdV}), and then the equation becomes
\begin{align} \label{bbm}
\tilde u_t+\tilde u \tilde u_x +\tilde u_{xxx}=\epsilon(\tilde u_{xx}+\tilde u_{xxt}), \text{\;\;in\;\;} \mathbb T.
\end{align}
Equation (\ref{bbm}) is a viscous Benjamin-Bona-Mahony (BBM) type of equation. The original BBM equation, $u_t+u_x+uu_x-u_{xxt}=0$, was proposed as a modification of the KdV equation for modeling long surface gravity waves \cite{BBM}. The energy identity of (\ref{bbm}) reads
\begin{align*}
\frac{1}{2}\frac{d}{dt}(|\tilde u|^2+\epsilon|\tilde u_x|^2)=-\epsilon |\tilde u_x|^2.
\end{align*}
This implies, with the Poincar\'e inequality, that the energy $|\tilde u|^2+\epsilon|\tilde u_x|^2$ decreases to zero exponentially fast forward in time, while grows exponentially fast backward in time. Now we use our perturbation argument to study the evolution of the shape of the cnoidal wave for (\ref{bbm}).
Similar to arguments introduced above, we assume that the elliptic parameter $m$ is very close to 1 so that $m$ is almost invariant in short time compared to the change of $l$, and in this case, $\text{cn}(x,m)$ can be approximated by $\text{sech}(x)$. Then, after straightforward calculations, we obtain
\begin{align*}
-2 l_t \approx 2 \epsilon \tilde C_2  l^3 +32  \epsilon  \tilde C_3 l^6 + l_t(8 \epsilon \tilde C_2 l^2
-8\epsilon \tilde C_4 l^2 +64 \epsilon \tilde C_3 l^5 t ),
\end{align*}
where
\begin{align*}
&\tilde C_2=\int_{\mathbb R} \text{sech}^4(x)[3\text{\,sech}^2(x) -2  ]  dx =\frac{8}{15} \,  ;      \\
&\tilde C_3=\int_{\mathbb R} \text{sech}^4(x) \text{tanh}(x) [3\text{\,sech}^2(x) -1 ]  dx   =0  \, ;\\
&\tilde C_4=\int_{\mathbb R} \text{sech}^4(x) \text{tanh}(x) [3\text{\,sech}^2(x) -1 ] x dx =\frac{1}{5} \,.
\end{align*}
It follows that
\begin{align*}
\partial_t l \approx -\frac{8\epsilon l^3}{20 \epsilon l^2 +15}.
\end{align*}
On one hand, this implies that $l(t)$ decreases to zero forward in time. On the other hand, backward in time, for large $l$, the amplitude $12ml^2$ of the cnoidal wave (\ref{cnoidal'}) grows approximately exponentially fast with exponentially fast shrinking width $\sim 1/l$, which is consistent with exponential growth of the energy backward in time.

\smallskip

\subsection{Energy spectra} \label{en-spec}
In this section, we would like to argue that the various backward behaviors of different dissipative equations are connected to their energy spectra as well as the Kolmogorov turbulence theory. We stress that our arguments are mainly physically oriented, rather than a rigorous mathematical treatment.

\subsubsection{The viscous KdV equation}
In the previous discussion, we have given some evidence that, if we take the cnoidal wave to be the initial value of the viscous KdV equation (\ref{vkdv}), then as the time goes backwards, the amplitude $\sim l^2$ of the cnoidal wave grows fast with its width $\sim \frac{1}{l}$ shrinking rapidly, and as a result, the solution may get close to a Dirac delta function. Since the Fourier transform of a Dirac delta function is a constant, we realize that the energy spectrum $E_k^{1D}$ of the solution $u$, defined as $E_{k}^{1D}=|\hat u_k|^2$, for $k\in \mathbb Z \backslash \{0\}$, is approximately \emph{invariant} with respect to $k$, for the spatial scales $\frac{L}{|k|}$ sufficiently larger than the width $\sim \frac{1}{l}$ of the cnoidal wave (see \cite{Nazarenko}). Now, by setting $s=-t$, we consider the energy identity of the viscous KdV (\ref{vkdv}), backward in time,
\begin{align}     \label{nergy}
\frac{1}{2}\frac{d}{ds}|u(s)|^2 = \nu |u_x(s)|^2 &=  \nu \sum_{k \in \mathbb Z \backslash \{0\}   } k^2 E_k^{1D}(s)  \notag\\
&\approx \nu \left(C\sum_{0<|k| \leq  k_{max}(s)} k^2+ \sum_{|k| >  k_{max}(s)} k^2 E_k^{1D}(s)\right) .
\end{align}
Notice that $k_{max}(s) \sim l(s)L$, where $L$ is the length of the domain $\mathbb T$. Recall we have argued in subsection \ref{sub-vkdv} that $l(s)$ tends to blow up in finite time, thus $k_{max}(s)$ increases very fast, which leads to a fast growth of $\frac{d}{ds}|u(s)|^2$ due to (\ref{nergy}), ending up with an energy blow-up in finite time.

\subsubsection{The Burgers equation}
The KBS equation (\ref{visKdV}) is also related to the Burgers equation. In fact, by setting $\beta=\gamma=0$ and $f=0$ in (\ref{visKdV}), it reduces to the Burgers equation $u_t-\nu u_{xx}+u u_x = 0$, and we have proved that all of its nonzero solutions blow up backward in finite time. It is well-known that the Burgers spectrum behaves like $E_k^{1D} \sim k^{-2}$ (see \cite{Burgers}). An energy formula analogous to (\ref{nergy}) indicates the energy tends to grow very fast as the time goes backwards, leading to a finite time blow-up.

\subsubsection{The 2D Navier-Stokes equations}
Next we attempt to discuss the relation between the Kolmogorov turbulence theory and the backward behavior of 2D NSE. The Richardson energy cascade theory \cite{Richardson} states that the energy transfers among eddies with similar sizes only, and the rate of the energy injection at large scales is equal on average to the energy dissipation rate at small scales, so that a statistically steady turbulent state forms. Far away from the source and sink, there is an inertial range, in which the turbulence properties only depend on the energy cascade rate. Based on a simple dimensional argument, in 1941, Kolmogorov and Obukhov \cite{Kolmogorov-41,Kolmogorov-41',Obukhov} derived a celebrated result of the energy spectrum for the 3D turbulence $\mathcal E_{\kappa} \sim C_{\epsilon} \epsilon^{\frac{2}{3}} \kappa^{-\frac{5}{3}}$ provided the wavenumber $\kappa \geq 0$ is in an inertial range, and $\epsilon$ is the energy cascade rate (equal to the energy dissipation rate).

On the other hand, the 2D incompressible ideal flow conserves two quadratic quantities, energy and enstrophy, and thus the 2D turbulence possesses a dual cascade behavior \cite{Kraichnan}: inverse energy cascade and direct enstrophy cascade, i.e., the energy is transferred from small to large vortices, while the enstrophy is transferred to small scales. By a dimensional argument, the inverse energy cascade spectrum is identical to the 3D turbulence spectrum mentioned above, but the enstrophy cascade spectrum reads $E_{\kappa}\sim C_{\eta} \eta^{\frac{2}{3}} \kappa^{-3}$, which is called the Kraichnan spectrum \cite{Kraichnan}, where $\eta$ is the enstrophy cascade rate, and the isotropic spectrum for each $\kappa \geq 0$ is defined by
$E_{\kappa}:=2\pi \kappa |\hat u_{\kappa}|^2$, with the isotropic assumption $|\hat u_{\kappa}|=|\hat u_{\vec \kappa}|$ for all $\vec {\kappa}\in \mathbb R^2$ with $|\vec {\kappa}|=\kappa$. A more precise version of the Kraichnan spectrum with a log correction is suggested in \cite{Kraichnan'}. The Kraichnan dual-cascade picture was recently confirmed numerically in \cite{Boffetta}. For the turbulent flow in the periodic domain, the energy spectrum $E_{\kappa}$ can also be defined using the following way (see \cite{book-Foias}): it is assumed that there exists $E_{\kappa}$  such that the total energy between the wavenumbers $k'$ and $k''$, i.e. $\sum_{\mathbf k\in \mathbb Z^2, \, k'\leq |\mathbf k|<k''} |\hat u_{\mathbf k}|^2 $, can be approximated by the integral $\int_{k'}^{k''} E_{\kappa} d\kappa$, for wavenumbers $k'<k''$ in the inertial range.

Now, assume the energy is injected near wavenumber $k_f$ and dissipated at very small wavenumbers $k_{-} \ll k_f$ and at very large wavenumbers $k_+\gg k_f$, and there are neither forcing nor dissipation at wavenumbers $\kappa$ such that $k_{-}<\kappa<k_f$ or $k_f< \kappa < k_{+}$, which are the inverse and the direct cascade inertial ranges, respectively. Under such scenario, if we set $s=-t$, then the energy identity of the 2D NSE, backward in time, reads
\begin{align}   \label{Spec}
&\frac{1}{2}\frac{d}{ds}|u|^2 = \nu |\nabla u|^2
\approx \nu \int_0^{\infty} \kappa^2 E_{\kappa}   d\kappa   \notag\\
&\approx \nu \left(\int_0^{k_{-}}    \kappa^2 E_{\kappa}  d\kappa + C_{\epsilon} \epsilon^{\frac{2}{3}}\int_{k_{-}}^{k_f}   \kappa^{\frac{1}{3}} d\kappa+
C_{\eta} \eta^{\frac{2}{3}}  \int_{k_f}^{k_+}    \kappa^{-1}   d\kappa+ \int_{k_+}^{\infty}    {\kappa}^2 E_{\kappa} d\kappa  \right).
\end{align}
Since in general the lower modes are not a main source for the energy blow-up, we focus on the higher modes $\kappa>k_f$. By a simple dimensional argument, the enstrophy cascade inertial range is expected to extend up to $k_+ \sim (\eta/\nu^3)^{\frac{1}{6}}$ \cite{Kraichnan}. As a result, the third integral in (\ref{Spec}) can be estimated as follows: $C_{\eta} \eta^{\frac{2}{3}}    \int_{k_f}^{k_+}    \kappa^{-1} d\kappa  \sim C_{\eta} \eta^{\frac{2}{3}}(\frac{1}{6}\ln(\eta/\nu^3)-\ln(k_f))$, which is controlled by the enstrophy cascade rate. Concerning the last term in (\ref{Spec}) for larger wavenumbers $\kappa>k_+$, we notice that, as the time goes backwards, the viscosity amplifies the enstrophy on high wavenumbers, thus the enstrophy tends to cascade \emph{inversely} backward in time, moving from small to large scales, so that the enstrophy on the small scales might not grow too rapidly. Consequently, the energy increasing rate $\frac{d}{ds}|u(s)|^2$ is possible to be relatively slow, and the finite-time backward blow-up of 2D NSE may be prevented, for initial data with energy concentrated on low modes.

Our argument above is consistent with the result in \cite{backward-NSE}, which states that there is a rich set of initial data in the energy space $\mathcal H$ for which the solution of 2D NSE can be extended to a global solution for all $t\in \mathbb R$. More precisely, it is shown in \cite{backward-NSE} that, if $p_0\in P_n \mathcal H$, for some $n$, then there exists a global solution $S(t)u_0$ such that $P_n u_0=p_0$, with $|Q_n u_0|\leq \max\{2|f|/\nu \lambda_1, \gamma_n^{1/2}|p_0|\}$ where
$\gamma_n=(\lambda_{n+1}+\lambda_n)/(\lambda_{n+1}-\lambda_n)$, and
$$\limsup_{t\rightarrow -\infty} \frac{\|S(t)u_0\|^2}{|S(t)u_0|^2} \leq \frac{\lambda_n+\lambda_{n+1}}{2}    ,$$ where $P_n$ is the projection onto the lower modes $|k|\leq n$, $Q_n=I-P_n$, and $\lambda_n$ is the $n$th eigenvalue of the linear Stokes operator in 2D periodic NSE. This indicates that, if $|Q_n u_0|$ is controlled by $|P_n u_0|$ in an appropriate manner, then the ratio of the enstrophy to the energy can be bounded as $t\rightarrow -\infty$, which may be a result of the inverse enstrophy cascade and the direct energy cascade, \emph{backward} in time, from the Kraichnan dual cascade picture for 2D incompressible flow. Also, it is easy to see from the energy identity that, if the ratio of the enstrophy to the energy is uniformly bounded for all negative time, then the solution is global for all $t\in \mathbb R$, growing exponentially as $t\rightarrow -\infty$, provided it does not belong to the global attractor.

In sum, the energy and enstrophy cascade as well as the Kolmogorov spectra may reveal the underlying mechanism for the different backward behavior of the KBS equation (\ref{visKdV}) and the 2D NSE. In particular, we would like to emphasize that there is a huge difference between the KdV spectrum $E_k^{1D} \sim constant$ and the 2D turbulence spectrum $E_{\kappa} \sim {\kappa}^{-3}$. For more material on Kolmogorov turbulence theory, please refer to monographs \cite{book-Birnir,book-Foias,book-Frisch,book-Zakharov}.

\bigskip

\section{Appendix}
We prove Theorem \ref{thmNLS} and Theorem \ref{thm-NSE} in the appendix.

\subsection{The proof of Theorem \ref{thmNLS}}
\begin{proof}
The proof consists of two parts: the first part is devoted to the $L^2$-estimate, and the second part is devoted to estimating the energy as well as the $H^1$-norm.

(1) \emph{$L^2$-estimate}

Multiply (\ref{dNLS}) by $\bar u$ and integrate on $\mathbb T$,
\begin{align}  \label{dNLS-1}
i \int_{\mathbb T} u_t \bar u dx - |u_x|^2 + |u|^4_{L^4} + i \lambda |u|^2
= \int_{\mathbb T} f \bar u dx.
\end{align}
By taking the imaginary part of (\ref{dNLS-1}), we obtain
\begin{align}    \label{dNLS-5}
\frac{1}{2} \frac{d}{dt}  |u|^2 + \lambda |u|^2 = \text{Im}  \int_{\mathbb T} f \bar u dx
\geq -\frac{\lambda}{2} |u|^2-\frac{1}{2\lambda} |f|^2,
\end{align}
where we used Cauchy-Schwarz inequality and Young's inequality.

It follows that
\begin{align*}
\frac{d}{dt}  |u|^2 + 3\lambda |u|^2 \geq -\frac{1}{\lambda} |f|^2.
\end{align*}
By using Gronwall's inequality, we have
\begin{align}   \label{L2est}
|u(t)|^2 \leq e^{-3\lambda t}  |u_0| ^2 +  \frac{1}{3\lambda^2} |f|^2 \left( e^{-3 \lambda t} -1   \right), \text{\;\;for\;\;} t\leq 0.
\end{align}

On the other hand, (\ref{dNLS-5}) also shows
\begin{align*}
\frac{1}{2}\frac{d}{dt} |u|^2 + \lambda |u|^2 = \text{Im}  \int_{\mathbb T} f \bar u dx \leq \frac{\lambda}{2}  |u|^2 +\frac{1}{2\lambda} |f|^2.
\end{align*}
Hence,
\begin{align*}
\frac{d}{dt} |u|^2 + \lambda |u|^2 \leq \frac{1}{\lambda} |f|^2.
\end{align*}
Again, by Gronwall's inequality, we obtain
\begin{align}    \label{L2est2}
|u(t)|^2 \geq e^{-\lambda t} |u_0|^2 +  \frac{1}{\lambda^2} |f|^2 (1-e^{-\lambda t}), \text{\;\;for\;\;}  t\leq 0.
\end{align}

By (\ref{L2est}) and (\ref{L2est2}), we infer, if $u$ does not belong to the global attractor, then
\begin{align}   \label{grow2}
c_1 e^{-\frac{1}{2}\lambda t}   \leq     |u(t)|   \leq   C_1 e^{-\frac{3}{2}\lambda t}, \text{\;\;as\;\;}  t\rightarrow -\infty,
\end{align}
where $c_1$ and $C_1$ depends on $|u_0|$, $|f|$, and $\lambda$. That is to say, the $L^2$-norm of $u(t)$, which is outside the global attractor, grows exponentially fast as the time $t$ goes backwards to negative infinity.

\smallskip

(2) \emph{Estimate of the energy and the $H^1$-norm}

Multiplying (\ref{dNLS}) by $\bar u_t$ followed by integrating on $\mathbb T$, we obtain
\begin{align*}
i |u_t|^2  -\int_{\mathbb T} u_{x} \bar u_{xt} dx +\int_{\mathbb T} u^2 \bar u \bar u_t dx + i \lambda \int_{\mathbb T} u \bar u_t dx
=\int_{\mathbb T} f \bar u_t dx.
\end{align*}
Taking the real part gives
\begin{align}  \label{dNLS-2}
\frac{d}{dt} \left(|u_x|^2 - \frac{1}{2}  |u|^4_{L^4} \right)
+ 2\lambda \left(\text{Im}\int_{\mathbb T} u \bar u_t dx\right) =-2\left(\text{Re}\int_{\mathbb T} f \bar u_t dx\right).
\end{align}
Now we need to estimate $\text{Im}\int_{\mathbb T} u \bar u_t dx$. Indeed, by taking the real part of (\ref{dNLS-1}), we infer
\begin{align}  \label{dNLS-3}
\text{Im}\int_{\mathbb T} u \bar u_t  dx=-\text{Im}\int_{\mathbb T} u_t \bar u dx =
|u_x|^2- |u|^4_{L^4} + \text{Re}\int_{\mathbb T} f \bar u dx.
\end{align}
Substituting (\ref{dNLS-3}) into (\ref{dNLS-2}), we obtain
\begin{align*}
\frac{d}{dt} \left(|u_x|^2  - \frac{1}{2} |u|^4_{L^4}\right)
+2\lambda \left(|u_x|^2  -|u|^4_{L^4} + \text{Re}\int_{\mathbb T} f \bar u dx\right)
=-2\left(\text{Re}\int_{\mathbb T} f \bar u_t dx\right).
\end{align*}
This can be written as
\begin{align}   \label{dNLS-4}
\frac{d}{dt}\phi+ 2\lambda \phi = \lambda |u|^4_{L^4} + 2\lambda \left(\text{Re}\int_{\mathbb T} f \bar u dx\right),
\end{align}
where $\phi(t)=|u_x|^2  - \frac{1}{2} |u|^4_{L^4}+ 2 \left(\text{Re}\int_{\mathbb T} f \bar u dx\right)$.

Note that, $|u|^2 \leq |u|_{L^4}^2 L^{\frac{1}{2}}   \leq |u|_{L^4}^4 +   \frac{L}{4}$, by virtue of H\"older's and Young's inequalities. Thus
\begin{align}  \label{reni}
\text{Re}\int_{\mathbb T} f \bar u dx \geq -\frac{1}{2} |u|^2 -\frac{1}{2} |f|^2  \geq -\frac{1}{2} |u|_{L^4} ^4 -\frac{L}{8} -\frac{1}{2} |f|^2.
\end{align}
Applying (\ref{reni}) on (\ref{dNLS-4}), one has
\begin{align*}
\frac{d}{dt} \phi+ 2\lambda \phi \geq -\lambda\left(\frac{L}{4}+|f|^2 \right).
\end{align*}
By Gronwall's inequality, we deduce
\begin{align}   \label{estphi}
\phi(t)\leq e^{-2\lambda t} \phi(0)+\frac{1}{2} \left( \frac{L}{4}+|f|^2  \right) (e^{-2\lambda t}-1), \text{\;\;for\;\;}  t\leq 0.
\end{align}

Define the energy $E(t):=|u_x|^2  - \frac{1}{2} |u|^4_{L^4}$. Then
\begin{align*}
\phi(t) =  E(t)+ 2 \left(\text{Re}\int_{\mathbb T} f \bar u dx\right) \geq  E(t)- |f|^2 -   |u(t)|^2.
\end{align*}
Therefore, by (\ref{L2est}) and (\ref{estphi}), one has, for $t\leq 0$,
\begin{align*}
E(t) &\leq \phi(t) +|u(t)|^2 + |f|^2   \notag\\
&\leq e^{-2\lambda t} \phi(0)+\frac{1}{2} \left( \frac{L}{4}+|f|^2  \right) (e^{-2\lambda t}-1) +e^{-3\lambda t}  |u_0| ^2 +  \frac{1}{3\lambda^2} |f|^2 \left( e^{-3 \lambda t} -1   \right) + |f|^2.
\end{align*}
This shows
\begin{align}    \label{growE}
E(t)  \leq C_2 e^{-3\lambda t}, \text{\;\;as\;\;} t\rightarrow -\infty,
\end{align}
where $C_2$ depends on $\|u_0\|$, $|f|$ and $L$.

Finally, by the 1D Agmon's inequality as well as the Young's inequality, we deduce
\begin{align*}
E(t)=|u_x|^2-\frac{1}{2}|u|_{L^4}^4 &\geq |u_x|^2-c |u|^3  \|u\| \notag\\
&\geq |u_x|^2 - \frac{1}{2}\|u\|^2  -c |u|^6 \notag\\
&\geq   \frac{1}{2}\|u\|^2 - |u|^2   -c |u|^6.
\end{align*}
Thus
\begin{align*}
\|u\|^2  \leq   2E(t)+  2|u|^2  +  c|u|^6.
\end{align*}
It follows from (\ref{grow2}) and (\ref{growE}) that
\begin{align*}
\|u\| \leq C_3 e^{-\frac{9}{2} \lambda t}, \text{\;\;as\;\;} t\rightarrow -\infty,
\end{align*}
where $C_3$ depends on $|u_0|$, $|f|$ and $\lambda$.

\end{proof}

\smallskip

\subsection{The proof of Theorem \ref{thm-NSE}}
\begin{proof}
The proof follows the idea in \cite{Foias-Saut}.
Let $f=0$. Recall the space $\mathcal H$ is the space of all periodic, divergence free, $L^2$-functions on $\mathbb T^2$ with vanishing mean values. Let $P$ be the orthogonal projection on $\mathcal H$ in $L^2_{per}(\mathbb T^2)^2$ and set $Au=-P \Delta u$ and
$B(v,w)=P[(v\cdot \nabla)w]$. Then the equation (\ref{NSE-h}) can be written in an equivalent form
\begin{align}   \label{ene-NSE}
u_t + \nu A^2 u + B(u,u)=0,    \text{\;\;in\;\;}  \mathbb T^2.
\end{align}
Since $(B(u,v),v)=0$ (see, e.g. \cite{C-F-book}), taking the scalar product of (\ref{ene-NSE}) with $u$ gives
\begin{align}    \label{da}
\frac{1}{2}\frac{d}{dt}|u|^2  +  \nu |Au|^2  =0.
\end{align}
Let $\lambda_1=(2\pi/L)^2$ be the first eigenvalue of $A=-P\Delta$. Then by Poincar\'e inequality $|Au|\geq \lambda_1  |u|$, we obtain
\begin{align*}
\frac{1}{2}\frac{d}{dt}|u|^2  +  \nu \lambda_1^2  |u|^2  \leq 0.
\end{align*}
It follows that
\begin{align}    \label{dst}
|u(t)|\leq e^{-\nu \lambda_1^2 (t-t_0)}|u(t_0)|, \text{\;\;for\;\;} t\geq t_0 \geq 0.
\end{align}

Next we show that $|u(t)|\geq e^{- b (t-t_0)} |u(t_0)| ,  \text{\;\;for\;\;} t\geq t_0>0$,
where $b>0$ will be specified later.
To this end, we set
\begin{align}   \label{st}
q(t)=\frac{|Au(t)|^2}{|u(t)|^2}.
\end{align}
By differentiating both side of $q (t) |u(t)|^2=|Au(t)|^2$, we obtain
\begin{align*}
\left(\frac{d}{dt}q \right) |u|^2+2 q \left( u_t,u    \right)=
2\left(\frac{d}{dt}Au, Au \right).
\end{align*}
It follows that
\begin{align*}
\frac{1}{2} \frac{d}{dt}q  & =\frac{1}{|u|^2} (u_t,A^2 u-q u) \notag\\
&=-\frac{1}{|u|^2} (\nu A^2 u+B(u,u), A^2 u-q u) \notag\\
&=-\frac{1}{|u|^2} \nu  |A^2 u -q u|^2-\frac{1}{|u|^2} (B(u,u),A^2 u-q u)-\nu (q u, A^2 u-q u).
\end{align*}
Notice, by (\ref{st}), we have
\begin{align*}
(q u, A^2 u-q u)=\frac{|Au|^2}{|u|^2}\left(   u, A^2 u \right)-
\frac{|Au|^4}{|u|^4}  \left(u,u  \right)=0.
\end{align*}
Combining the above two estimate and by letting $v=\frac{u}{|u|}$, we have
\begin{align*}
\frac{1}{2} \frac{d}{dt}q &=-\frac{1}{|u|^2} \nu  |A^2 u -q u|^2-\frac{1}{|u|^2} (B(u,u),A^2 u-q u) \notag\\
&=- \nu  |(A^2  -q) v|^2- |u| (B(v,v),(A^2 -q) v) .
\end{align*}
Consequently, by Cauchy-Schwarz inequality and Young's inequality, we infer
\begin{align*}
\frac{1}{2} \frac{d}{dt}q+\nu  |(A^2  -q) v|^2
&=- |u| (B(v,v),(A^2 -q) v) \notag\\
&\leq \frac{\nu}{2} |(A^2 -q) v|^2 + \frac{1}{2\nu}    |u|^2 |B(v,v)|^2.
\end{align*}
It follows that
\begin{align*}
\frac{d}{dt}q &\leq \frac{1}{\nu}    |u|^2 |B(v,v)|^2  =\frac{1}{\nu|u|^2}|B(u,u)|^2.
\end{align*}
Notice that $|B(u,u)|\leq |u|_{L^4} |\nabla u|_{L^4}\leq c |u|^{\frac{1}{2}}  \|u\|  |Au|^{\frac{1}{2}}$, by virtue of 2D Ladyzhenskaya's inequality $|u|_{L^4}\leq c|u|^{\frac{1}{2}}\|u\|^{\frac{1}{2}}.$
Then
\begin{align*}
\frac{d}{dt}q \leq \frac{c}{\nu|u|^2} |u|\|u\|^2 |Au|
\leq  c \nu^{-1} \lambda_1^{-\frac{1}{2}} |u|\|u\|  \left(\frac{|Au|^2}{|u|^2} \right)
=c \nu^{-1} \lambda_1^{-\frac{1}{2}}  |u|\|u\| q,
\end{align*}
where Poincar\'e inequality has been used.
This gives
\begin{align} \label{wohu-1}
q(t) \leq   q(t_0)  \exp\left\{c \nu^{-1} \lambda_1^{-\frac{1}{2}} \int_{t_0}^t |u| \|u\| ds \right\}  , \text{\;\;for\;\;}  t\geq t_0>0.
\end{align}

Now we estimate $\int_{t_0}^t |u| \|u\| ds$. Indeed, by (\ref{da}), one has
\begin{align}    \label{tst}
\int_{t_0}^t |Au|^2 ds =\frac{1}{2\nu} \left( |u(t_0)|^2-|u(t)|^2 \right).
\end{align}
Therefore, by Poincar\'e inequality, Cauchy-Schwarz inequality, as well as the estimates (\ref{dst}) and (\ref{tst}), we deduce
\begin{align*}
\int_{t_0}^t |u| \|u\| ds
&   \leq  \lambda^{-\frac{1}{2}}  \int_{t_0}^t |u| |Au| ds \notag\\
&\leq  \lambda^{-\frac{1}{2}} \left(\int_{t_0}^t |u|^2 ds\right)^{\frac{1}{2}}   \left(\int_{t_0}^t |Au|^2 ds\right)^{\frac{1}{2}} \notag\\
&\leq \lambda^{-\frac{1}{2}}|u(t_0)|
\left(   (2\nu  \lambda_1^2    ) ^{-1}  (1-e^{-2\nu\lambda_1^2(t-t_0)})        (|u(t_0)|^2-|u(t)|^2)        \right)^{\frac{1}{2}} \notag\\
&\leq (2\nu)^{-\frac{1}{2}} \lambda_1^{-\frac{3}{2}}  |u(t_0)|^2.
\end{align*}
Substituting this estimate into (\ref{wohu-1}) gives
\begin{align*}
q(t) \leq  q(t_0)     \exp{\left\{      c \nu^{-\frac{3}{2}} \lambda_1^{-2}   |u(t_0)|^2             \right \} }             .
\end{align*}
That is
\begin{align*}
|Au(t)|^2 \leq  \frac{|Au(t_0)|^2}{|u(t_0)|^2} |u(t)|^2   \exp{\left\{      c \nu^{-\frac{3}{2}} \lambda_1^{-2}   |u(t_0)|^2             \right \} }             ,
\end{align*}
and along with the energy identity, we obtain
\begin{align*}
\frac{d}{dt}|u(t)|^2 +2 \nu   \frac{|Au(t_0)|^2}{|u(t_0)|^2} |u(t)|^2
\exp{\left\{c \nu^{-\frac{3}{2}} \lambda_1^{-2}   |u(t_0)|^2 \right \} }
\geq 0.
\end{align*}
It follows that
\begin{align*}
|u(t)|\geq |u(t_0)| e^{- b (t-t_0)},  \text{\;\;for\;\;} t\geq t_0>0,
\end{align*}
where $b=\nu  \frac{|Au(t_0)|^2}{|u(t_0)|^2}    \exp{\left\{c \nu^{-\frac{3}{2}} \lambda_1^{-2}   |u(t_0)|^2 \right \} }         $     .
\end{proof}

\bigskip
\noindent {\bf Acknowledgment.}
This work was supported in part by a grant of the ONR and by the NSF grants DMS--1109640 and DMS--1109645.

\end{document}